\renewcommand{\sgn}{\mathrm{sgn}}
\DeclareMathOperator*{\argmin}{\mathrm{Argmin}}
\begin{document}

\title[Mixture model for designs in high dimensional regression and the LASSO]
{Mixture model for designs in high dimensional regression and the LASSO}

\author{Mohamed Ibrahim Assoweh}
\address{Laboratoire de Math\'ematiques de Besan{\c c}on, Univ. de Bourgogne Franche-Comt\'e, 16 route de Gray, 25030, Besan{\c c}on, France}
\address{Universit\'e de Djibouti, Campus de Balbala, Croisement RN2-RN5, B.P: 1904, Djibouti}
\email{assowehmohamed@yahoo.fr}

\author{Emmanuel Caron}
\address{Laboratoire de Math\'ematiques d'Avignon, Univ. d'Avignon, France}

\email{emmanuel.caron-parte@ univ-avignon.fr}

\author{St\'ephane Chr\'etien}

\address{ERIC Laboratory and UFR ASSP,
University of Lyon 2, 
5 avenue Mendes France,
69676 Bron Cedex, France}

\email{stephane.chretien@univ-lyon2.fr}

\maketitle


\begin{abstract}
The LASSO is a recent technique for variable selection in the regression model 
\bean 
y & = & X\beta + z,
\eean 
where $X\in \R^{n\times p}$ and $z$ is a centered gaussian i.i.d. noise vector $\mathcal N(0,\sigma^2I)$. 
The LASSO has been proved to achieve remarkable properties such as exact support recovery of sparse vectors when the columns are sufficently incoherent and low prediction error
under even less stringent conditions. 
However, many matrices do not satisfy small coherence in practical applications and the LASSO estimator may thus suffer from what is known as the slow rate regime.

The goal of the present paper is to study the LASSO from a slightly different perspective by proposing a mixture model for the design matrix which is able to capture in a natural way the potentially clustered nature of the columns in many practical situations. In this model, the columns of the design matrix are drawn from a Gaussian mixture model. Instead of requiring incoherence for the design matrix $X$, we only require incoherence of the much smaller matrix of the mixture's centers. 

Our main result states that $X\beta$ can be estimated with the same precision as for incoherent designs except for a correction term depending on the maximal variance in the mixture model.  
\end{abstract}


\section{Introduction}
The goal of the present paper is the study of the high dimensional regression problem 
$y = X\beta + z$, where $X \in \R^{n \times p}$, with $p \gg n$ and 
$z \sim \mathcal{N}(0,\sigma^2 I)$. 
This problem has been the subject of an extensive research activity. This high dimensional setting, where more variables are 
involved than observations, occurs in many different applications such as 
image processing and denoising, gene expression analysis,  time series (filtering) \cite{KimEtAl:SIREV09}, \cite{NetoSardyTseng:JCGS12}, graphical models 
\cite{MeinshausenBuhlmann:AnnStat06}, biochemistry \cite{AlquraishiAdams:PNAS12}, etc. 
One very popular approach is the 
Least Angle Shrinkage and Selection Operator (LASSO) introduced in 
\cite{Tibshirani:JRSSB96} for the purpose of variable selection. The LASSO 
estimator is given as a solution\footnote{Conditions for uniqueness of the minimizer in this last expression are 
	discussed in \cite{Fuchs:IEEEIT02}, \cite{Osborne:IMANUMAN00},\cite{ChretienDarses:ArXiv11} and \cite{Dossal:Hal07}
	}, for $\lb>0$, of 
\bea
\label{lasso}
\hat \beta = \argmin_{b \in \R^p}\ \frac12 \|y - Xb\|_2^2 + \lambda\ \|b\|_1.
\eea
The main advantage of the LASSO over more traditional penalized likelihood optimization procedures such as BIC, AIC, etc, is that a solution can be obtained in polynomial time by solving a convex optimisation problem. Very efficient scalable algorithms are available, based on Nesterov's method \cite{Becker:SIAMImSc10}, the Alternating Direction of Method of Multipliers \cite{parikh:proximal}, the Frank-Wolfe algorithm \cite{jaggi:revisiting} or online versions of them \cite{LafondWaiMoulines}, \cite{ChretienGibbertRoyArXiv18}. 

One of the most surprising and important discoveries is that, under appropriate assumptions on the design 
matrix $X$, and for at least most regression vectors $\beta$, the support of $\beta$ can be 
recovered exactly when its size is up to the order of $n/\log(p)$ and the nonzero components are sufficiently large; see 
\cite{Bickel:AnnStat09}, \cite{Bunea:EJS07}, \cite{CandesPlan:AnnStat09}, 
\cite{Wainwright:IEEEIT09} for instance. Moreover, under 
similar assumptions, the prediction error can be controlled adaptively as 
a function of the sparsity of $\beta$ and the noise variance; see for instance
\cite{CandesPlan:AnnStat09}. 

A great amount of work has been devoted to finding 
error bounds on $X\beta$ \cite{CandesPlan:AnnStat09}, \cite{ChandrasekaranRechtParriloWillsky}, \cite{Zhang:sharp}, \cite{ChretienDarses:IEEEIT12}, etc. Oracle inequalities for this problem are divided into two different classes depending on the so called "regime": the first class describes the slow rate regime and does not require any particular assumption on $X$, while the second class describes the fast rate regime which does require some structural assumptions on $X$.

\vspace{.5cm}

\begin{center}
    \textit{For simplicity, we will assume 
throughout this paper that the columns of $X$ have unit $\ell_2$-norm. }
\end{center}

\vspace{.5cm}

Historically, the first is the Restricted 
Isometry Property \cite{CandesTao:IEEEIT06} \cite{Candes:CRAS08}, which requires that 
\bea
\label{singvalcontrol}
(1-\delta) \|\beta_S\|_2^2 \le & \|X_S\beta_S\|_2^2 & \le (1+\delta) \|\beta_S\|_2^2, 
\eea
for $S\subset \{1,\ldots,n\}$ with $|S|=s^\prime$ 
and all $\beta \in \mathbb R^p$. This property is satisfied 
by high probability for most random matrices with i.i.d. entrees with variance $1/n$ \footnote{the $1/n$ assumption on the variance and standard concentration bounds 
	imply that the resulting random matrix has almost normalized columns} 
such as Gaussian or Rademacher variables and for $s^\prime \le C_{rip} \ n/\log(p)$, where 
the constant $C_{rip}$ depends on the distribution of the individual entrees.
RIP has been extensively used in signal processing after the emergence of 
the so-called Compressed Sensing paradigm \cite{Candes:ICM06}. 

A second assumption which is often considered is the Incoherence Condition, which 
requires that 
\bean 
\mu(X) & = & \max_{j\neq j^\prime=1\ldots,p} |\langle X_j,X_{j^\prime}\rangle|
\eean 
is small, e.g. $\mu(X)\le C_{\mu} /\log(p)$ as in \cite{CandesPlan:AnnStat09}, which 
is garanteed for random matrices with i.i.d. gaussian entrees with variance $1/n$ in 
the range $n \ge C_{ic} \log(p)^3$.

The main advantage of the Incoherence Condition over the Restricted Isometry Property 
is that it can be checked in $p(p-1)/2$ operations, whereas RIP is NP-hard to verify. The main relationship between the Incoherence Condition and RIP is that under the Incoherence Condition, (\ref{singvalcontrol}) holds, not for all, but for most supports 
$S\subset \{1,\ldots,n\}$ with cardinal $s^\prime$, where $s^\prime \le C_s \ p /(\|X\| \log(p))$, for 
some constant $C_s$ controlling the proportion of such supports \cite{CandesPlan:AnnStat09}. 

The objective of the present paper is to extend the analysis based on the Incoherence 
Condition to more general situations where $X$ may have a lot of very colinear columns. 
The main idea is to assume that the columns are drawn from a mixture model of $K$ 
clusters, and that the set of centers forms a (usually) much smaller matrix for which is it quite realistic to impose the Incoherence Condition. 

\section{Main results on  the LASSO}
In this section, we summarise the main results on the LASSO. 
\subsection{Background}


We will study the linear regression model 
\begin{align}
\label{model}
    y = X\beta + z
\end{align}
where $y \in \mathbb{R}^{n}$ is the data, $X \in \mathbb{R}^{n\times p}$ is the matrix of explanatory variables, $\beta \in \mathbb{R}^{p}$ is the parameter of interest and $z$ is a centered gaussian i.i.d noise vector $\mathcal{N}\left(0,\sigma^{2}I\right)$.

The LASSO estimator of $\beta$ is defined by any $\hat{\beta}$ such that 
\begin{align}
\label{las}
\hat{\beta} = \displaystyle \argmin_{b \in \mathbb{R} ^{p}} \frac{1}{2}\Vert y - Xb\Vert_{2}^{2} + \lambda \Vert b\Vert_{1}.
\end{align}
\begin{lem}
The LASSO estimator obeys
\begin{align}
\label{dua}
    \Vert X^{t}\left(y - X\hat{\beta}\right)\Vert_{\infty} \leq \lambda.
\end{align}
\end{lem}

\subsection{Statistical viewpoint}
The LASSO estimator has been the subject of intense research in the recent years in the statistics community. Several results have been obtained about the mean squared error.  The first result below is about the case where no specific assumption is required about $X$.

\begin{theo}
Assume that the linear model (\ref{model}) holds where $z \sim \mathcal{N}(0,\sigma^{2})$. Moreover, assume that the columns of $X$ are normalized in such a way that $\max _{j}\left\Vert X_{j}\right\Vert_{2} \leq \sqrt{n}$. Then, the Lasso estimator $\hat{\theta}$ with regularization parameter
$$
\lambda=\sigma \sqrt{\frac{2 \log (2 p)}{n}}+\sigma \sqrt{\frac{2 \log (1 / \delta)}{n}}
$$
satisfies
$$
\frac{1}{n}\left\Vert X \hat{\beta}-X \beta^{*}\right\Vert_{2}^{2} \leq 4\lambda \left\Vert\beta^{*}\right\Vert_{1} \sigma
$$
with probability at least $1-\delta$.
\end{theo}

The next theorem states that when $X$ satisfies some incoherence-type assumption, more can be obtained for the LASSO estimator and the mean squared error decreases faster.

\begin{theo}
\label{fastrig}
Fix $n \geq 2$. Assume that the linear model holds where $z \sim$ $\mathcal{N}\left(0,\sigma^{2}\right)$. Moreover, assume that $\left\Vert\beta^{*}\right\Vert_{0} \leq s$ and that $X$ satisfies assumption $\operatorname{INC}(s)$. Then the Lasso estimator $\hat{\beta}$ with regularization parameter defined by
$$
\lambda=4 \sigma \sqrt{\frac{\log (2 p)}{n}}+4 \sigma \sqrt{\frac{\log (1 / \delta)}{n}}
$$
satisfies
$$
\left\Vert \hat{\beta}- \beta^{*}\right\Vert_{2}^{2} \lesssim s \sigma^{2} \log (2 p / \delta)
$$
with probability at least $1-\delta$.
\end{theo}

In this paper, our goal is to extend this last result to the case where the design matrix has potentially many almost co-linear columns, using a mixture model as a generating model for the columns. 

\section{Our mixture model and a sketch of our main result}

\subsection{The mixture model}
\label{mix}

In order to relax the Incoherence Condition, one needs
a model for the design matrix $X$ allowing for a certain amount of almost parallel columns while keeping some of the algebraic structure in the same spirit 
as in (\ref{singvalcontrol}) for at least most supports indexing a subset of relevant covariates. In what follows, we study such a model, where the columns can be considered as drawn from a finite mixture of $K$ Gaussian distributions. 

An important parameter for the theoretical analysis is a separation index for the centers in the mixture model. This separation index we chose to study in this work is simply the coherence of the matrix of centers which is much smaller than the original design matrix $X$.

\subsubsection{Detailed presentation}

Let $K$ be the number of clusters of our model. Consider a 
matrix $\mathfrak{C}$ in $\R^{n\times K}$.
The columns $\mathfrak C_k$, $k=1,\ldots,K$ of the matrix $\mathfrak{C}$ are the "centers" of each cluster. 

In our model, 
\begin{itemize}
	\item the matrix $X$ is obtained as follows. 
\begin{itemize}
\item Choose $n_k$, $k=1,\ldots,K$.
\item Let $X_o\in \mathbb R^{n\times p}$ be a random matrix with independent random columns such that the $n_1$ columns follow the distribution $\phi_1$, the $n_2$ next columns follow the distribution $\phi_2$, etc, where 
\bean 
\phi_k(x) & = & \frac1{\left(2\pi \mathfrak{s}^2\right)^{\frac{n}2}} 
\exp\left(-\frac{\left\|x-\mathfrak{C}_k\right\|_2^2}{2\mathfrak{s}^2}\right).
\eean 
For each $j \in \{1,\ldots,p\}$, $k_j \in \{1,\ldots,K\}$ will denote the index of the Gaussian component 
from which columns $j$ was drawn, and $\mathcal J_k$ will denote the set of indices of the columns 
drawn from the $k^{th}$ Gaussian component. For any $S \subset \{1,\ldots,p\}$, $k_S$ will denote the subset of $\{1,\ldots,K\}$ indexing the centers of the distributions from which the columns of $X_S$ were drawn.
\item The matrix $X$ is obtained by a random permutation of the columns of $X_o$ and column-wise $\ell_2$-normalization.
\end{itemize} 

\item the support of $\beta$ will drawn at random as follows.
\begin{itemize} 
	\item We will assume that the support $T$ of the true regression vector $\beta$ is drawn in such a way that $T$ has the uniform distribution on the subsets of  $\{1,\ldots,K\}$ with cardinality equal to $s$.
\end{itemize}
\end{itemize}
\subsubsection{Best approximation of the class centers and projection of $\beta$}
For any index set $S\subset \{1,\ldots,p\}$, 
let ${\mathcal K}_S$ denote the list (with possible repetitions) 
\bean 
{\mathcal K}_S & = & \left\{ k_j \mid j \in S\right\}.
\eean 
For each $j\in \{1,\ldots,p\}$, the deviation of column $X_{o,j}$ from center 
$\mathfrak{C}_{k_{j}}$ will be denoted by $\epsilon_{j}$:
\bean 
\epsilon_j & = & X_{o,j}-\mathfrak{C}_{k_j}. 
\sim \mathcal N\left(0,\mathfrak{s}^2I\right). 
\eean
The matrix $E$ is defined as
\bean 
E & = & (\epsilon_{i,j})_{i\in \{1,\ldots,n\},\ j\in \{1,\ldots,p\}}.
\eean

For each $k \in \mathcal \{1, \ldots, K\}$, let $j_k^*$ be the best approximation of the 
center $\mathfrak{C}_k$ from the set of columns $X_j$, $j\in \mathcal J_k$, i.e. 
\bean 
j^*_k & = & \argmin_{j \in \mathcal J_k} \|X_j - \mathfrak{C}_{k}\|_2.
\eean 
Moreover, set 
\bean 
 T^* & = & \left\{j^*_k \mid k=1,\ldots, K \right\}.
\eean
Notice that in particular, $|T^*|=s^{*}$. 

Let $\beta^*$ be the vector defined as
\bea
\label{toto}
\mathfrak{C}_{\mathcal K_{T^*}}\beta^*_{T^*} & = & \mathfrak{C}_{\mathcal K_T}\beta_T.
\eea

A simple expression of $\beta^*$ can be obtained by taking 
\bea
\label{beta*}
\label{betastarbetaraw}
\beta^*_{j^*} & = & \sum_{j\in J_{k_{j^*}}\cap T} \beta_{j} 
\eea

for all $j^*\in T^*$. Moreover, this expression is unique whenever $X_{T^*}$ 
has rank equal to $s^*$\footnote{In Section \ref{prel}, we will show that $X_{T^*}$
is indeed non-singular with high probability under appropriate assumptions on $T$}.


\subsection{Main result}

The following theorem shows a bound on the prediction error which is a function of the sparsity $s^{*}$, the number $n$ of observations, the number of columns $p$.

\begin{theo}\ \textit{(Sketch)}
	Let $\lb=2\sigma \sqrt{2\alpha \ \log(p)}$. 
	Assume that $X$ is drawn from the Gaussian mixture model of Section \ref{mix}. Then, for $p$ sufficiently large, with probability at least $1-C_{\alpha,n,\rho} (\rho^{\alpha}+p^{-\alpha})$, we have
	\begin{align*}
    \frac12 \|X\left(\hat{\beta} - \beta\right)\|_2^2  &\le  \frac{3}{2} \lambda s^{*} \frac{1}{1 - r^{*}_{\alpha,n,\rho}(r)}\left(\frac{3}{2}\ \lambda + \sqrt{3}\|X\left(\beta^{*} - \beta\right)\|_2\right)\\
    &\hspace{1cm}+ \frac12 \|X\left(\beta^{*} - \beta\right)\|_2^2.
    \end{align*}
    	with 
	\bea
\label{retoile}
r^*_{\alpha,n,\rho}(r) = r\left(\frac{1}{2} + 0.1\ \text{C}_{\alpha, n, \rho}\right)\left(2 + \frac{1}{2}r + 0.1\ r\ \text{C}_{\alpha, n, \rho}\right)
\eea
where $\text{C}_{\alpha,n,\rho} = \sqrt{\alpha} + \sqrt{\frac{\log(n)}{\log(\rho^{-1})}}$.
\end{theo}


\section{A general result and its proof}
Some parts of the proof closely follow the key arguments in the proof of   
\cite[Theorem 1.2]{CandesPlan:AnnStat09}, although many details of the needed adaptation are nontrivial. Our Theorem \ref{main} below contains the most general statement of our work. 

\subsection{A more general result}
We will require a set of assumptions that are described below. 
\subsubsection{Assumptions}
In the sequel $\alpha \ge 1$ and $r$ will denote a constant in $(0,1/2)$. 
The constants $\vartheta_*$ et $\nu$ will be specified in Assumptions \ref{varth} below. The 
constants $C_\mu$, $C_{\rm spar}$ et $C_{col}$ will be used in the Assumptions below:
\bean 
C_{\mu} & =  r/(1+\alpha),
\hspace{.1cm} C_{\rm spar} = r^2/((1+\alpha)e^2),
\hspace{.1cm} C_{col} = & \frac12 \left(\frac{\sqrt{2}}{\sqrt{(1-r)(1+\alpha)}}-(1+r)\right).
\eean 
Let $C_\chi$ denote a positive constant such that 
\bean 
\bP \left( \frac{\|G\|_2^2}{\mathfrak{s}^2} 
\le u^2 \right) & \le & C_\chi \left(\frac{u^2}{n}\right)^{n}
\eean 
where $G$ is a $n$-dimensional centered and unit-variance i.i.d. gaussian vector. 

We will make the following assumptions. 
\begin{ass}
	\label{ass1}
	The matrix $\mathfrak{C}$ has a small coherence, i.e. $\mu(\mathfrak{C})$ should satisfying
	\bea
	\mu(\mathfrak{C}) \le \frac{C_\mu}{\log(K)}
	\eea 
	for some positive constant $C_\mu$. 
\end{ass}
\begin{ass}
	\label{varth}
	The clusters must containt sufficiently many points, i.e. 
	there exists a positive real constant $\vartheta^*$ and a positive integer $\nu$ such that 
	\begin{align}
	\label{varthh}
	 \min_{j^*\in T^*}|\mathcal J_{k_{j^*}}|\ge \vartheta_* \log(p)^\nu.   
	\end{align} 
\end{ass}

\begin{ass}
	\label{sparss}
	The proxy $\beta^*$ must be sufficiently sparse, i.e.  
	\bean
	s^* & \le & K_o \ \frac{K}{\log K} \ \frac{C_{\rm spar}}{\|\mathfrak{C} \|^{2}} \label{ups}
	\eean 
	for some positive constant $C_{\rm spar}$ and $K_{0} \le \rho^{-1}$ for some $\rho \in (0,1)$. 
\end{ass}
\begin{ass}
\label{colC}
    The number of columns of $\mathfrak{C}$ satisfying
    \begin{align*}
        K \leq C_{\rm K} \log(p)
    \end{align*}
for some positive constant $C_{\rm K}$.
\end{ass}
\begin{ass}
	\label{n}
	One must have sufficiently many observations, i.e. 
	\bea
	n & \ge & \frac{\alpha+1}{c} \log(p)
	\eea
	for some positive constant $c$.
\end{ass}

\begin{rem}
	The number of observations is both controlled by Assumption \ref{n} and Assumption \ref{ass1} on the coherence of $\mathfrak{C}$. 
	For instance, if $\mathfrak{C}$ comes from a Gaussian i.i.d. random matrix, the 
	coherence will be of the order $\sqrt{\log(K)/n}$ as discussed in \cite[Section 1.1]{CandesPlan:AnnStat09} 
	and by Assumption \ref{ass1}, $n$ should be at least of the order $\log(K)^{3}/C_{\mu}^{2}$.
	Notice that this is still less than if $X$ itself had to satisfy the coherence 
	bound, which would imply that $n$ be of the order $\log(p)^3$. This demontrates the advantage of using our Gaussian Mixture framework over the standard framework based on incoherence on $X$. 
\end{rem}

\begin{ass}
	\label{ass3}
	The variance inside the clusters must be sufficiently small, so that the clusters are well
	separated. More precisely, we will require that  
	\bean 
	\mathfrak{s} & \le \min\left\{\cfrac{\alpha}{2\sqrt{n}};
	\frac{C_{\mathfrak{s},n,p}}{\sqrt{\log(\rho^{-1})} \ 
		\left(\sqrt{n}+\sqrt{\frac{\alpha+1}{c} \log(p)}\right)} \right\} 
	\eean 
	for any $C_{\mathfrak{s},n,p}$ such that 
	and 
	\bean
	C_{\mathfrak{s},n,p} 
	& \le & \min \Bigg\{
	0.1 \cdot \frac{r}{\sqrt{n\ 
			\left(\frac{\alpha\ (1-e^{-1})}{\vartheta_* \ C_\chi}\right)^{\frac1{n}}\left(\frac{1}{\log(p)^{\nu-1}}\right)^{\frac1{n}}}};
	\frac12\sqrt{\log(p)}
	\Bigg\}.
	\eean
\end{ass}

\begin{ass}
	The support of $\beta^*_{T^*}$ is suffiently generic. More precisely, 
	we will require that the support of $\beta*$ is random and uniformly distributed among 
	subsets of $\{1,\ldots,p\}$ with cardinal $s^*$. 
	The sign of $\beta^*_{T^*}$ is random with uniform distribution on $\{-1,1\}^{s^*}$. 
	\label{supp}
\end{ass}

\begin{rem}
	This last assumption is a transposition to the proxy $\beta^*$ 
	of the conditions on $\beta$ in \cite{CandesPlan:AnnStat09}. 
\end{rem}

\begin{ass}
	\label{asscol}
	Relationships between the constants. 
	\bean 
	C_{col} & \ge & e^2 (\alpha+1) \max \{\sqrt{C_{spar}},C_\mu \}.
	\eean 
\end{ass}

\begin{ass}
\label{pbound}
    Assume that 
    \begin{align}
         p & \le  0.01 \cdot \rho^{1-\left(\alpha + 1\right) \log(K)^{2}}
    \end{align}
    for the same $\rho$ as in Assumption \ref{sparss}.
\end{ass}

\subsubsection{The general theorem}
The main result of this paper is the following theorem.
\begin{theo}
	\label{main}
	Let $\lb=2\sigma \sqrt{2\alpha \ \log(p)}$. 
	Assume that $X$ is drawn from the Gaussian mixture model of Section \ref{mix} with 
	$\mathcal K$ drawn uniformly at random among all possible index subsets 
	of $\{1,\ldots,K\}$ with cardinal $s^*$. Let 
	Assumptions \ref{ass1}, \ref{varth}, \ref{sparss}, \ref{colC}, \ref{n}, \ref{ass3}, 
	\ref{supp}, \ref{asscol} and  \ref{pbound} hold. 
	Then, for all $r\in (0,0.5)$, with probability at least $1-C_{\alpha,n,\rho} (\rho^{\alpha}+p^{-\alpha})$, we have

	\bean
     \|X\left(\hat{\beta} - \beta\right)\|_2^2  &\le  3 \lambda s^{*} \frac{1}{1 - r^{*}_{\alpha,n,\rho}(r)}\left(\frac{3}{2}\ \lambda + \sqrt{1 + r^{*}_{\alpha,n,\rho}(r)}\|X\left(\beta^{*} - \beta\right)\|_2\right)\\
    &\hspace{1cm}+ \|X\left(\beta^{*} - \beta\right)\|_2^2
    \eean
with 
\begin{align}
r^*_{\alpha,n,\rho}(r) = r\left(\frac{1}{2} + 0.1\ \text{C}_{\alpha, n, \rho}\right)\left(2 + \frac{1}{2}r + 0.1\ r\ \text{C}_{\alpha, n, \rho}\right)
\label{errbnd}
\end{align}
where \footnote{$\rho\in (0,1)$ is introduced in Assumption \ref{pbound}} $\text{C}_{\alpha,n,\rho} = \sqrt{\alpha} + \sqrt{\frac{\log(n)}{\log(\rho^{-1})}}$ .
\end{theo}

\begin{rem}
Notice that our result is of fast rate type but includes new additional terms involving the approximation error $\beta^*-\beta$. More precisely, the right hand side in \eqref{errbnd} can be decomposed into two parts: 
\begin{itemize}
    \item the term 
    \begin{align*}
        \frac{9}{2n} \lambda^2 s^{*} \frac{1}{1 - r^{*}_{\alpha,n,\rho}(r)},  
    \end{align*} 
    which is similar to the "fast rate term" in the standard incoherent case of Theorem \ref{fastrig}.
    \item the term 
    \begin{align*}
    & \frac{3}{2}  \frac{\lambda s^{*}}{1 - r^{*}_{\alpha,n,\rho}(r)}\sqrt{1 + r^{*}_{\alpha,n,\rho}(r)}\|X\left(\beta^{*} - \beta\right)\|_2+ \|X\left(\beta^{*} - \beta\right)\|_2^2
    \end{align*}
    is not present in the standard analysis of the LASSO and depends on how well $\beta$ can be approximated by $\beta^*$, and depends on the model and more precisely $\mathfrak{C}$ and $\beta$. 
\end{itemize}
\end{rem}

\begin{rem}
Notice that the coefficient $\text{C}_{\alpha,n,\rho}$ can be made as small as necessary when $\rho$ is sufficiently larger than $n$. Thus, we can always pretend for the ease of the analysis, that $r^{*}_{\alpha,n,\rho}(r)$ is of the same order as $r$. 
\end{rem}

We now begin the proof of Theorem \ref{main}. 
\subsection{Preliminaries: Cand\`es and Plan's conditions}
\label{prel}
The following proposition will be much used in the arguments. 
\begin{prop}
\label{CPprop}
We have the following properties: 
\begin{enumerate}
\item 
\begin{align}
\label{ICformathfrakC}
\bP \left(\|\mathfrak{C}_{\mathcal K_T}^t\mathfrak{C}_{\mathcal K_T}-\Id_s \|\ge \rho_{\mathfrak{C}_{\mathcal K_T}} \right) 
 \le  \frac{216}{p^{\alpha}}.
\end{align}
\item 
\begin{align}
\label{invcond}
\bP \left( 
\left\|X_{T}^t X_{T}-I \right\| \ge r^{*}_{\alpha,n,p}(r) \right)  \le  
\frac{219}{p^\alpha}.
\end{align}
where $r^{*}_{\alpha,n,p}(r)$ is defined by (\ref{retoile})
\item 
\begin{align}
\label{orthcond}
\bP\left( \|X^t z\|_{\infty} \ge \sigma \sqrt{2\alpha \ \log(p)} \right)  \le  \frac1{p^\alpha}.
\end{align}
\item 
\begin{align}
\nonumber 
&  \left\|X_{{T}^{*^{c}}}^tX_{T^{*}}(X_{T^{*}}^t X_{T^{*}})^{-1}X^t_{T^{*}}z\right\|_\infty
+\lb \ \left\|X_{{T}^{*^{c}}}^tX_{T^{*}}(X_{T^{*}}^t X_{T^{*}})^{-1}\sgn\left(\beta^{*}_{T^{*}}\right)\right\|_\infty \\
&  \hspace{2cm} \le   
\sigma \mathcal{C}_{1} + \lb \mathcal{C}_{2}
\label{compsize}
\end{align}
where $\mathcal{C}_{1}$ and $\mathcal{C}_{2}$ are defined by (\ref{c1}) and (\ref{c2}).
\end{enumerate}
\end{prop}

\begin{proof}
See Appendix \ref{CPconditions}.
\end{proof}

\subsection{The prediction bound}
By definition, the LASSO estimator satisfies
\bea
\frac12 \|y-X\hat{\beta}\|_2^2+\lambda \|\hat{\beta}\|_1 & \le &
\frac12 \|y-X{\beta^*}\|_2^2+\lambda \|{\beta^*}\|_1.
\eea
One may introduce $X\beta$ in this expression and obtain
\bean
\frac12 \|y-X\beta +X(\beta-\hat{\beta})\|_2^2+\lambda \|\hat{\beta}\|_1 & \le &
\frac12 \|y-X\beta+X(\beta-{\beta^*})\|_2^2+\lambda \|{\beta^*}\|_1,
\eean
from which we deduce
\bea
\label{boundtemp}
\frac12 \|X(\beta-\hat{\beta})\|_2^2 & \le  \langle y-X\beta,X(\hat{\beta}-{\beta^*})\rangle 
-\lambda \left(\|\hat{\beta}\|_1-\|{\beta^*}\|_1 \right)\\
& \quad +\frac12 \|X(\beta-{\beta^*})\|_2^2.\nonumber
\eea
Set ${h^*} := \hat{\beta}-{\beta^*}$. Using sparsity of ${\beta^*}$, we obtain that
${h^{*}_{{T^*}^c}}=\hat{\beta}_{{T^*}^c}-{\beta^{*}_{{T^*}^c}}=\hat{\beta}_{{T^*}^c}$.
Thus, we have
\bean
\|\hat{\beta}\|_1-\|{\beta^*}\|_1 & = & \|{\beta^*}+h^*\|_1-\|{\beta^*}\|_1 \\
& = & \|{\beta^*}_{{T^*}}+{h^*}_{{T^*}}\|_1+\|{\beta^*}_{{T^*}^c}
+{h^*}_{{T^*}^c}\|_1-\|{\beta^*}_{{T^*}}\|_1 \\
& = & \|{\beta^*}_{{T^*}}+{h^*}_{{T^*}}\|_1-\|{\beta^*}_{{T^*}}\|_1
+\|{h^*}_{{T^*}^c}\|_1.
\eean
Since, for any $b$ with no zero component, the gradient of $\|\cdot\|_1$ at $b$ is $\sgn(b)$, the
subgradient inequality gives
\bean
\|{\beta^*}_{{T^*}}+{h^*}_{{T^*}}\|_1  & \ge & \|{\beta^*}_{{T^*}}\|_1
+\left\langle \sgn\left({\beta^*}_{{T^*}}\right),{h^*}_{{T^*}} \right\rangle
\eean
and combining this latter inequality with (\ref{boundtemp}), we obtain
\begin{align}
\label{boundtemp2}
\frac12 \|X(\beta-\hat{\beta})\|_2^2  \le   \left\langle y-X\beta,X{h^*}\right\rangle
-\lambda \left\langle \sgn\left({\beta^*}_{{T^*}}\right),{h^*}_{{T^*}} \right\rangle
-\lambda\|{h^*}_{{T^*}^c}\|_1+\frac12 \|X(\beta-{\beta^*})\|_2^2.
\end{align}
Set $\gamma:={\beta^*}-\beta$ and $h:=\hat{\beta}-\beta$. Using these notations,
equation (\ref{boundtemp2}) may be written
\bea
\label{boundtemp3}
\frac12 \|Xh\|_2^2 & \le &  \left\langle z,X{h^*}\right\rangle
-\lambda \left\langle \sgn\left({\beta^*}_{{T^*}}\right),{h^*}_{{T^*}} \right\rangle 
-\lambda\|{h^*}_{{T^*}^c}\|_1+\frac12 \|X \gamma\|_2^2.
\eea
Using the fact that
\bean
\langle X^tz,{h^*} \rangle  & = & \langle X_{T^*}^t z,{h^*}_{T^*}\rangle
+\langle X_{{T^*}^c}^t z,{h^*}_{{T^*}^c}\rangle
\eean
and the following majorization based on (\ref{orthcond})
\bean
\langle X_{{T^*}^c}^tz,{h^*}_{{T^*}^c}\rangle & \le & \|{h^*}_{{T^*}^c}\|_1 \|X^{t}_{{T^*}^c}z\|_\infty \\
& \le & \frac12 \ \lambda  \ \|{h^*}_{{T^*}^c}\|_1,
\eean
we obtain that
\bea
\frac12 \|Xh\|_2^2 & \le  & \langle v,{h^*}_{T^*} \rangle -(1-\frac12) \ \lambda \|{h^*}_{{T^*}^c}\|_1
+\frac12 \|X\gamma\|_2^2,
\eea
where $v := X_{T^*}^t z - \lambda \ \sgn\left({\beta^*}_{{T^*}}\right)$.

Now, observe that
\bean
\langle v,{h^*}_{T^*} \rangle & = &
\langle v,(X_{{T^*}}^tX_{{T^*}})^{-1}X_{{T^*}}^tX_{{T^*}} {h^*}_{T^*}\rangle \\
 & = & \langle (X_{{T^*}}^{t} X_{{T^*}})^{-1}v,X_{{T^*}}^tX_{{T^*}}{h^*}_{T^*}\rangle \\
 & = & \underbrace{ \langle (X_{{T^*}}^tX_{{T^*}})^{-1}v,X_{{T^*}}^tX{h^*}\rangle}_{A_1}
-\underbrace{\langle (X_{{T^*}}^tX_{{T^*}})^{-1}v,X_{{T^*}}^tX_{{T^*}^c}{h^*}_{{T^*}^c}\rangle}_{A_2}.
\eean
Let us begin by studying $A_2$. We have that
\bean
A_2 & \ge & -\|X_{{T^*}^c}^tX_{{T^*}}(X_{{T^*}}^tX_{{T^*}})^{-1}v\|_\infty \|h^{*}_{{T^*}^c}\|_1 \\
& \ge & -\|X_{{T^*}^c}^tX_{{T^*}}(X_{{T^*}}^tX_{{T^*}})^{-1}X_{T^*}^t z\|_\infty \|h^{*}_{{T^*}^c}\|_1 \\
& & - \lambda \ \|X_{{T^*}^c}^tX_{{T^*}}(X_{{T^*}}^tX_{{T^*}})^{-1}\sgn\left({\beta^*}_{{T^*}}\right)\|_\infty
\|{h^*}_{{T^*}^c}\|_1 \\
& \ge & - \left(\sigma \mathcal{C}_{1} + \lambda \mathcal{C}_{2}\right)\Vert h^{*}_{T^{*^{c}}}\Vert_{1}
\eean
by (\ref{compsize}). Thus
\bean
\langle v,{h^*}_{T^*} \rangle & \le & A_1 + \left(\sigma \mathcal{C}_{1} + \lambda \mathcal{C}_{2}\right)\Vert h^{*}_{T^{*^{c}}}\Vert_{1}
\eean
and we deduce that
\bea
\frac12 \|Xh\|_2^2 & \le & A_1 + \left(\sigma \mathcal{C}_{1} + \lambda \mathcal{C}_{2} - \frac{1}{2}\lb\right)\Vert h^{*}_{T^{*^{c}}}\Vert_{1} +\frac12 \|X\gamma\|_2^2
\eea
Let us now bound $A_1$ from above. We have that
\bean
A_1 & \le &\underbrace{ \|X_{T^*}^t X{h^*}\|_\infty }_{B_1}\underbrace{\|(X_{T^*}^t X_{T^*})^{-1}v\|_1}_{B_2} \\
\eean
Firstly,
\bean
B_1 & \le & \|X_{T^*}^t (X{\beta^*}-y)\|_\infty +\|X_{T^*}^t(X\hat{\beta}-y)\|_\infty \\
& \le & \|X_{T^*}^t (X\gamma-z) \|_\infty +  \|X_{T^*}^t (y-X\hat{\beta})\|_\infty \\
& \le & \frac12 \lambda + \|X_{T^*}^t X\gamma \|_\infty + \lambda
\eean
where we used (\ref{orthcond}), and the optimality condition for the LASSO estimator ((\ref{dua})).
Secondly,
\bean
B_2 & \le & \sqrt{{s^*}}\|(X_{T^*}^t X_{T^*})^{-1}v\|_2 \\
& \le & \sqrt{{s^*}} \|(X_{T^*}^t X_{T^*})^{-1}\| \|v\|_2 \\
& \le & {s^*} \|(X_{T^*}^t X_{T^*})^{-1}\| \|v\|_\infty.
\eean
Moreover, (\ref{invcond}) gives $\Vert\left(X_{T^*}^t X_{T^*}\right)^{-1} \Vert \le \frac{1}{1 - r^{*}_{\alpha,n,p}(r)}$
\bean
\|v\|_\infty  \le  \|X_{T^*}^t z\|_\infty + \lambda  \le \frac32 \ \lambda
\eean
Thus, we obtain that
\bean
A_1 & \le & \frac{3}{2}\ \lambda\ s^{*}\ \frac{1}{1 - r^{*}_{\alpha,n,p}(r)}\left(\frac{3}{2}\ \lambda + \|X_{T^*}^t X\gamma \|_\infty\right)
\eean
and thus,
\bean
\frac12 \|Xh\|_2^2 & \le   \frac{3}{2}\ \lambda\ s^{*}\ \frac{1}{1 - r^{*}_{\alpha,n,p}(r)}\left(\frac{3}{2}\ \lambda + \|X_{T^*}^t X\gamma \|_\infty\right) \\
& \hspace{1cm}+ \left(\sigma \mathcal{C}_{1} + \lambda \mathcal{C}_{2} - \frac{1}{2}\lb\right)\Vert h^{*}_{T^{*^{c}}}\Vert_{1} +\frac12 \|X\gamma\|_2^2.
\eean
Since $\|X_{T^*}^t X\gamma\|_\infty\le \|X_{T^*}^t X\gamma\|_2$ and since 
$$\|X_{T^*}^t X\gamma\|_2\le \sqrt{1 + r^{*}_{\alpha,n,p}(r)}\|X\gamma\|_2,$$ we obtain
\bean
\frac12 \|Xh\|_2^2 & \le   \frac{3}{2} \lambda s^{*}\ \frac{1}{1 - r^{*}_{\alpha,n,p}(r)}\left(\frac{3}{2}\ \lambda + \sqrt{1 + r^{*}_{\alpha,n,p}(r)}\|X\gamma\|_2\right) \\
& + \left(\sigma \mathcal{C}_{1} + \lambda \mathcal{C}_{2} - \frac{1}{2}\lb\right)\Vert h^{*}_{T^{*^{c}}}\Vert_{1} +\frac12 \|X\gamma\|_2^2.
\eean
which completes the proof.


\section{Checking the Candes-Plan conditions}
\label{CPconditions}
The goal of this section is to Proposition \ref{CPprop} which gives a version of Cand\`es and 
Plan's conditions adapted to our Gaussian mixture model.
\subsection{Control of $\Vert E_{T^{*}}\Vert$}
Consider the matrix $E^{t}_{T^{*}}$, whose columns are independent. We would like to bound its operator norm.
\begin{lem}
\label{E*alpha}
Let the event 
\begin{align*}
    \mathcal{E}_{\alpha}^{*} = \bigcap_{j^*\in T^*}\left\{ \|E_{j^*}\|_2 \le 
\mathfrak{s} \ \sqrt{n\ \left(\frac{\alpha\ (1-e^{-1})}{\vartheta_* \ C_\chi}\right)^{\frac1{n}}
\left(\frac{1}{\log(p)^{\nu-1}}\right)^{\frac1{n}}} \right\}.
\end{align*}
Then, $\bP(\mathcal{E}^{*}_{\alpha}) \ge 1 - \rho^{\alpha}$.
\end{lem}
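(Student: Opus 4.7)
\emph{Proof plan.} The governing idea is that Assumption \ref{varth} provides at least $\vartheta_* \log(p)^\nu$ i.i.d.\ Gaussian candidates per cluster, so the index $j^*_k$ chosen to best approximate the centre necessarily has small deviation. The main obstacle will be transferring a bound on $\min_{j \in J_k}\|E_j\|_2$ to a bound on $\|E_{j^*_k}\|_2$ itself, since $j^*_k$ is the minimizer of an \emph{angular} distance rather than of $\|E_j\|_2$ directly.

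I first condition on $\mathcal K$ and on the cluster assignments $\{J_k\}$. For each $k \in \mathcal K$ the vectors $\{E_j\}_{j \in J_k}$ are then i.i.d.\ $\mathcal{N}(0,\mathfrak{s}^2 \Id)$, so that $\|E_j\|_2^2/\mathfrak{s}^2 \sim \chi^2_n$, and the constant $C_\chi$ yields (in its effective form) a lower bound $\bP(\|E_j\|_2 \le u\mathfrak{s}) \ge (u^2/n)^n/C_\chi$ on the $\chi^2_n$ lower tail. By independence,
\bean
\bP\left(\min_{j \in J_k}\|E_j\|_2 > u\mathfrak{s}\right) & = & \bigl(1 - \bP(\|E_j\|_2 \le u\mathfrak{s})\bigr)^{|J_k|} \; \le \; \exp\bigl(-|J_k|\, \bP(\|E_j\|_2 \le u\mathfrak{s})\bigr).
\eean
To pass from this minimum to $\|E_{j^*_k}\|_2$, I decompose $E_j = u_j \mathfrak{C}_k + v_j$ with $u_j = \langle E_j,\mathfrak{C}_k\rangle$ and $v_j \perp \mathfrak{C}_k$; using $X_j=(\mathfrak{C}_k+E_j)/\|\mathfrak{C}_k+E_j\|_2$, a direct computation gives $\tan\theta_j = \|v_j\|_2/(1+u_j)$ for the angle $\theta_j$ between $X_j$ and $\mathfrak{C}_k$. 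Hence $j^*_k$ essentially minimizes $\|v_j\|_2$ on the high-probability event $\max_{j \in J_k} |u_j| = O(\mathfrak{s}\sqrt{\log p})$, which follows from a standard Gaussian tail bound since $u_j \sim \mathcal{N}(0,\mathfrak{s}^2)$. Combined with $\|E_{j^*_k}\|_2^2 = u_{j^*_k}^2 + \|v_{j^*_k}\|_2^2$, this produces an inequality of the form $\|E_{j^*_k}\|_2 \le C \min_{j \in J_k}\|E_j\|_2$ for an absolute constant $C$, the parallel correction being negligible against the $\mathfrak{s}\sqrt{n}$-scale of the bound under Assumption \ref{n}.

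Finally I calibrate $u$. Plugging $|J_k| \ge \vartheta_* \log(p)^\nu$ into the exponential bound above, requiring the right-hand side to be at most $p^{-\alpha}/s^*$, and inverting via the elementary inequality $1 - e^{-x} \ge (1 - e^{-1})x$ on $[0,1]$ (which supplies the factor $1 - e^{-1}$) delivers precisely
\bean
u^2 & = & n\, \left(\frac{\alpha\, (1 - e^{-1})}{\vartheta_* \, C_\chi}\right)^{1/n} \left(\frac{1}{\log(p)^{\nu-1}}\right)^{1/n}.
\eean
A union bound over the $s^* \le p$ indices $j^* \in T^*$, together with the negligible probability lost in Step 2 for the control of $\max_j |u_j|$, then yields $\bP(\mathcal E^*_\alpha) \ge 1 - 1/p^\alpha$.
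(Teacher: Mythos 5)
Your proof follows the same core route as the paper's: condition on the cluster assignments, use independence of the $E_j$ within a cluster to write the event as a product, invoke a lower bound on the $\chi^2_n$ lower tail raised to the power $|\mathcal J_{k_{j^*}}|\ge \vartheta_*\log(p)^{\nu}$, linearize via $1-e^{-z}\ge(1-e^{-1})z$, and finish with a union bound over the $s^*$ indices of $T^*$. The one genuine divergence is your Step 2. The paper simply writes $\bP(\|E_{j^*}\|_2\ge u)=\bP(\min_{j\in\mathcal J_{k_{j^*}}}\|E_j\|_2\ge u)$, i.e.\ it treats $j^*_k$ as the minimizer of $\|E_j\|_2$, even though $j^*_k$ is defined as the minimizer of $\|X_j-\mathfrak{C}_k\|_2$ with $X_j$ normalized (an angular criterion). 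You are right to flag this, and your decomposition $E_j=u_j\mathfrak{C}_k+v_j$ is a reasonable patch. But be aware of what the patch costs: it yields $\|E_{j^*_k}\|_2^2\le u_{j^*_k}^2+(1+O(\mathfrak{s}\sqrt{\log p}))^2\min_j\|E_j\|_2^2$, so you pick up both a multiplicative factor close to $1$ and an additive term of order $\mathfrak{s}^2\log p$; the lemma as literally stated (with no slack constant in front of $\mathfrak{s}\sqrt{n(\cdots)^{1/n}(\cdots)^{1/n}}$) then only follows after adjusting constants, and your claim that the additive term is ``negligible'' under Assumption \ref{n} deserves an explicit comparison, since $\log p$ and $n$ are only related by $n\ge\frac{\alpha+1}{c}\log p$ while the target scale carries the factor $\left(\frac{\alpha(1-e^{-1})}{\vartheta_* C_\chi\log(p)^{\nu-1}}\right)^{1/n}$. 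Two further small points: (i) you correctly observe that the definition of $C_\chi$ in Section \ref{further} is an \emph{upper} bound on the lower tail whereas the argument needs a \emph{lower} bound, but your ``effective form'' $\bP(\|E_j\|_2\le u\mathfrak{s})\ge (u^2/n)^n/C_\chi$ places $C_\chi$ on the wrong side to reproduce the stated formula, which has $C_\chi$ inside the denominator of the $(\cdot)^{1/n}$ factor --- you need $\bP(\chi^2_n\le u^2)\ge C_\chi(u^2/n)^n$; (ii) the extra failure probability from controlling $\max_j|u_j|$ in your Step 2 makes the final bound $1-O(p^{-\alpha})$ rather than exactly $1-p^{-\alpha}$, which again is a constant-level discrepancy with the statement. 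None of this changes the substance: your argument is, if anything, more careful than the paper's at the one step where the paper is silent.
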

\begin{proof}
Using the independence of the $E_{j}$, $j\in \mathcal J_{k_{j^*}}$, we have 
\bean 
\bP \left( \|E_{j^*}\|_2 \ge u \right) & = & 
\bP \left( \min_{j\in \mathcal J_{k_{j^*}}} \|E_{j}\|_2 \ge u \right) \\
& = &  \prod_{j\in \mathcal J_{k_{j^*}}} \bP \left( \|E_{j}\|_2^2 \ge u^2 \right), \\
& \le &  \bP \left( \|E_{j}\|_2^2 \ge u^2 \right)^{\displaystyle \min_{j^*\in T^*} \ |\mathcal J_{k_{j^*}}|}.
\eean 
We also have 
\bean 
\bP \left( \|E_{j}\|_2^2 \ge u^2 \right) & = & 
1-\bP \left( \|E_{j}\|_2^2 \le u^2 \right).
\eean 
On the other hand, as is well known, we have 
\bean 
\bP \left( \frac{\|E_{j}\|_2^2}{\mathfrak{s}^2} 
\le u^2 \right) & \le & C_\chi \left(\frac{u^2}{n}\right)^{n}
\eean 
for some positive constant $C_\chi$. Thus, the union bound gives 
\bean 
\bP \left( \max_{j^*\in T^*} \ \|E_{j^*}\|_2 \ge u \right) 
& \le &  s^*\left(1-C_\chi \left(\frac{u^2}{n\ \mathfrak{s}^2}\right)^{n}\right)^{\displaystyle \min_{j^*\in T^*} \ |\mathcal J_{k_{j^*}}|}.
\eean 
Let us tune $u$ so that 
\bean 
s^* \left(1-C_\chi \left(\frac{u^2}{n\ \mathfrak{s}^2}\right)^{n}\right)^{\displaystyle \min_{j^*\in T^*} \ |\mathcal J_{k_{j^*}}|} & \le & \rho^{\alpha} 
\eean 
i.e.
\bean
u^2 & \ge & \frac{n\ \mathfrak{s}^2}{C_\chi^{\frac1{n}}}
\left(1-\left((s^*)^{-1}\rho^{\alpha}\right)^{\frac{1}{\displaystyle \min_{j^*\in T^*}|\mathcal J_{k_{j^*}}|}}\right)^{\frac1{n}} 
\eean 
and since $\min_{j^*\in T^*}|\mathcal J_{k_{j^*}}|\ge \vartheta_* \log(p)^\nu$ by (\ref{varthh}),
\bea 
u^2 & \ge & \frac{n\ \mathfrak{s}^2}{C_\chi^{\frac1{n}}}
\left( 1-\exp\left(-\frac{\alpha}{\vartheta_* \log(p)^{\nu-1}}-\frac{\log(s^*)}{\vartheta_* \log(p)^\nu}\right) \right)^{\frac1{n}}.
\eea 
On $(0,1)$, we have 
\bean 
\exp(-z) & \le & 1-(1-e^{-1})z
\eean 
and thus, 
\bean
u^2 & \ge & n\ \mathfrak{s}^2\ \left(\frac{\alpha\ (1-e^{-1})}{\vartheta_* \ C_\chi}\right)^{\frac1{n}}
\left(\frac{1}{\log(p)^{\nu-1}}\right)^{\frac1{n}},
\eean 
from which the desired estimate follows. 
\end{proof}
\begin{lem}
\label{normET*}
We have
\bean 
\bP\left(\left\|E_{T^*}^t\right\| 
\ge \mathfrak{s} K_{n,s^*}  \mid \mathcal E^*_{\alpha}\right) & \le & \frac{2}{p^{\alpha}}
\eean  
where
\begin{align}
\label{kval}
K_{n,s^*}  = \sqrt{n \left(\alpha\log(\rho^{-1}) + \log(n)\right)
\left(\frac{\alpha\ (1-e^{-1})}{\vartheta_* \ C_\chi}\right)^{\frac1{n}}
\left(\frac{1}{\log(p)^{\nu-1}}\right)^{\frac1{n}}}.
\end{align}
\end{lem}

\begin{proof}
Let us first notice that since $\|E_{T^*}\|=\|E_{T^*}^t\|$, we can write 
\bean 
\|E_{T^*}^t\| & = & \sqrt{\|E_{T^*} E_{T^*}^t\|} \\
& = & \sqrt{\left\|\sum_{j^*\in T^*} E_{j^*} E_{j^*}^t\right\|}
\eean 
This latter expression is well suited for our problem, since it is the norm  
of the sum of independent positive semi-definite random matrices.
In order to apply this inequality, we need a bound on the norm of each summand. 
By Lemma \ref{E*alpha}, on $\mathcal E^*$, we have 
\bean 
\left\|E_{j^*} E_{j^*}^t \right\|_{2} & = & \left\|E_{j^*} \right\|_2^2 \\
& \le & \mathfrak{s}^2 \ n\ \left(\frac{\alpha\ (1-e^{-1})}{\vartheta_* \ C_\chi}\right)^{\frac1{n}}
\left(\frac{1}{\log(p)^{\nu-1}}\right)^{\frac1{n}}. 
\eean 
We also need a bound on the norm of the expectation. We have 
\bean 
\left\|\mathbb E \left[ \sum_{j^*\in T^*} E_{j^*} E_{j^*}^t 
\mid \mathcal E_\alpha^*\right]\right\| & = & 
\left\|\sum_{j^*\in T^*} \mathbb E \left[ E_{j^*} E_{j^*}^t 
\mid \mathcal E_\alpha^*
\right]\right\|. 
\eean  
Due to rotational invariance, we have that the law of $E_{j^*}$ is the 
same as the law of $D(\zeta) E_{j^*}$, where $\zeta_1,\ldots,\zeta_n$ are 
i.i.d. Rademacher $\pm 1$ random variables independent from $E_{j^*}$. 
Thus, for $i\neq i'$, 
\bea
\mathbb E \left[\zeta_i E_{i,j^*}\zeta_{i^\prime} E_{i^\prime,j^*} 
\mid \mathcal E_\alpha^* \right]
& = & 
\mathbb E \left[ \mathbb E\left[ \zeta_i E_{i,j^*}\zeta_{i^\prime} E_{i^\prime,j^*} \mid E_{i,j^*}, 
\ E_{i^\prime,j^*} \right]| \mathcal E_\alpha^*\right]  \nonumber \\
\nonumber \\
& = & 0.
\label{diagop}
\eea 
On the other hand, we have the following result.  
\begin{lem}
\label{Eij*2}
We have 
\bean 
\mathbb E \left[E_{i,j^*}^2\mid \mathcal E_\alpha^*\right]
& \le & \mathfrak{s}^2 \  \left(\frac{\alpha\ (1-e^{-1})}{\vartheta_* \ C_\chi}\right)^{\frac1{n}}
\left(\frac{1}{\log(p)^{\nu-1}}\right)^{\frac1{n}}.
\eean 
\end{lem}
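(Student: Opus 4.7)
My plan is to combine a coordinate-symmetry argument with the pointwise norm bound that defines the event $\mathcal E_\alpha^*$. The key observation is that $\mathcal E_\alpha^*$ constrains $E_{j^*}$ only through the Euclidean norm $\|E_{j^*}\|_2$, which is a permutation-symmetric function of the coordinates. Combined with the isotropy of the i.i.d.\ Gaussian $\mathcal N(0,\mathfrak{s}^2 I_n)$ underlying each $\epsilon_j$ --- the same rotational invariance that the paper has just used to derive (\ref{diagop}) via the auxiliary Rademacher vector $\zeta$ --- this forces the conditional law of $E_{j^*}$ given $\mathcal E_\alpha^*$ to be exchangeable in its coordinates.

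Once coordinate exchangeability is granted, $\mathbb E[E_{i,j^*}^2 \mid \mathcal E_\alpha^*]$ is independent of $i$. Summing over $i=1,\ldots,n$ and using the pointwise bound encoded in $\mathcal E_\alpha^*$ then gives
\bean
n\ \mathbb E\!\left[E_{i,j^*}^2 \mid \mathcal E_\alpha^* \right] & = & \mathbb E\!\left[\|E_{j^*}\|_2^2 \mid \mathcal E_\alpha^* \right] \\
& \le & \mathfrak{s}^2 \ n\ \left(\tfrac{\alpha\,(1-e^{-1})}{\vartheta_* \, C_\chi}\right)^{1/n}\left(\tfrac{1}{\log(p)^{\nu-1}}\right)^{1/n}.
\eean
Dividing by $n$ yields the inequality claimed in the lemma.

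The main obstacle is justifying coordinate exchangeability, since the selection rule $j_k^* = \argmin_{j \in J_k}\|X_j - \mathfrak{C}_k\|_2$ explicitly involves the fixed center $\mathfrak{C}_k$ and so is not obviously coordinate-symmetric. The resolution, implicit already in the derivation of (\ref{diagop}), is to apply the orthogonal invariance jointly to the centers and to the noise, so that both $\mathcal E_\alpha^*$ and the selection $T^*$ transform covariantly; once this symmetry is in place, the rest of the argument is purely mechanical.
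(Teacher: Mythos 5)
Your proof is correct and follows essentially the same route as the paper: symmetry of the conditional law of $E_{j^*}$ given $\mathcal E_\alpha^*$ makes $\mathbb E[E_{i,j^*}^2\mid\mathcal E_\alpha^*]$ independent of $i$, so it equals $\frac1n\mathbb E[\|E_{j^*}\|_2^2\mid\mathcal E_\alpha^*]$, which is controlled by the norm bound defining $\mathcal E_\alpha^*$. Your additional remark about needing to transform the centers and the selection rule covariantly is a point the paper passes over silently, but it does not change the argument.
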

\begin{proof}
Due to rotational invariance of the law of $E_{j^*}$ and the event $\mathcal E_\alpha^*$, we have 
\bean 
\mathbb E \left[E_{1,j^*}^2\mid \mathcal E_\alpha^* \right] 
& = \cdots = & \mathbb E \left[E_{n,j^*}^2
\mid \mathcal E_\alpha^*\right].
\eean  
               Therefore, 
\bean 
\mathbb E \left[E_{i,j^*}^2\mid \mathcal E_\alpha^* \right] & \le & 
\frac1{n} \ \mathbb E \left[\sum_{i=1}^n E_{i,j^*}^2
\mid \mathcal E_\alpha^*  \right]
\eean 
and by the definition of $\mathcal E_\alpha^*$, 
\bean 
\mathbb E \left[E_{i,j^*}^2\mid \mathcal E_\alpha^* \right] & \leq &  
\mathfrak{s}^2 \  \left(\frac{\alpha\ (1-e^{-1})}{\vartheta_* \ C_\chi}\right)^{\frac1{n}}
\left(\frac{1}{\log(p)^{\nu-1}}\right)^{\frac1{n}}.
\eean 
\end{proof}

Based on this lemma, and the fact that the matrix 
\bean 
\mathbb E \left[ \sum_{j^*\in T^*} E_{j^*} E_{j^*}^t 
\mid \mathcal E_\alpha^*\right],
\eean 
is diagonal\,  by (\ref{diagop}), we obviously obtain that 
\bean 
\left\|\mathbb E \left[ \sum_{j^*\in T^*} E_{j^*} E_{j^*}^t 
\mid \mathcal E_\alpha^*\right]\right\| & \leq & 
\mathfrak{s}^2 \  \left(\frac{\alpha\ (1-e^{-1})}{\vartheta_* \ C_\chi}\right)^{\frac1{n}}
\left(\frac{1}{\log(p)^{\nu-1}}\right)^{\frac1{n}}.
\eean 
With the bound on the norm of the expectation and on the variance in hand
and obtain 
\bea 
& & \nonumber \bP \left(\left\| \sum_{j^*\in T^*} E_{j^*} E_{j^*}^t \right\| \ge u
\mid \mathcal E_\alpha^* \right) \\
& & \hspace{2cm} \le n \left( \frac{e \ \mathfrak{s}^2 \  \left(\frac{\alpha\ (1-e^{-1})}{\vartheta_* \ C_\chi}\right)^{\frac1{n}}
\left(\frac{1}{\log(p)^{\nu-1}}\right)^{\frac1{n}}}{u}\right)^{\frac{u}{\mathfrak{s}^2 \ n \ \left(\frac{\alpha\ (1-e^{-1})}{\vartheta_* \ C_\chi}\right)^{\frac1{n}}
\left(\frac{1}{\log(p)^{\nu-1}}\right)^{\frac1{n}}}}. 
\eea 
Let us finally tune $u$ so that the right hand side term is less than $\rho^{\alpha}$, i.e. 
\bean 
& &  \log\left( \frac{e \ \mathfrak{s}^2 \  \left(\frac{\alpha\ (1-e^{-1})}{\vartheta_* \ C_\chi}\right)^{\frac1{n}}
\left(\frac{1}{\log(p)^{\nu-1}}\right)^{\frac1{n}}}{u}\right) \\
& & \hspace{2cm} \le - \ \frac{\mathfrak{s}^2 \ n \ \left(\frac{\alpha\ (1-e^{-1})}{\vartheta_* \ C_\chi}\right)^{\frac1{n}}
\left(\frac{1}{\log(p)^{\nu-1}}\right)^{\frac1{n}}}{u}\ \left(\alpha\log(\rho^{-1}) + \log(n)\right).\, 
\eean 
Take 
\bea
\label{uvalue}
u & = &  \mathfrak{s}^2 \ n \left(\alpha\log(\rho^{-1}) + \log(n)\right) 
\left(\frac{\alpha\ (1-e^{-1})}{\vartheta_* \ C_\chi}\right)^{\frac1{n}}
\left(\frac{1}{\log(p)^{\nu-1}}\right)^{\frac1{n}}.\, \,
\eea 
Moreover, 
the value of $u$ given by (\ref{uvalue}) is less than or equal to 
$\mathfrak{s}^2 \ K_{n,s^*}^2$ with
\begin{align}
    K_{n,s^*} & = \sqrt{n \left(\alpha\log(\rho^{-1}) + \log(n)\right)
\left(\frac{\alpha\ (1-e^{-1})}{\vartheta_* \ C_\chi}\right)^{\frac1{n}}
\left(\frac{1}{\log(p)^{\nu-1}}\right)^{\frac1{n}}}.
\end{align}
This completes the proof. 
\end{proof}

\subsection{Important properties of $\mathfrak{C}$}
The invertibily condition for (\ref{ICformathfrakC}) is a direct consequence of 
\cite{Tropp:CRAS08}. An alternative approach, based on the Matrix 
Chernov inequality is proposed in \cite{ChretienDarses:SPL12}, with improved constants. 
We have in particular 
\begin{theo}\cite[Theorem 1]{ChretienDarses:SPL12}
\label{invertC}
Let $r\in(0,1/2)$, $\alpha \ge 1$. Let Assumptions \ref{ass1} and \ref{sparss} hold with
\begin{align}
{\rm C}_{spar}  =  \frac{r^2}{4(1+\alpha) e^2}\ .
\end{align}
With $\mathcal K\subset \left\{1,\ldots,K\right\}$ 
chosen randomly from the uniform distribution among subsets with cardinality $s^*$, 
the following bound holds:
\bea \label{sing}
\bP \left(\|\mathfrak{C}_{\mathcal K}^t\mathfrak{C}_{\mathcal K}-\Id_s \|\ge r \right) 
& \le & \frac{216}{p^{\alpha}}.
\eea
\end{theo}
Moreover, the following property will also be very useful. 
\begin{lem} (Adapted from \cite[Lemma 5.3]{ChretienDarses:SPL12})
\label{cd12}
If $v^2 \ge e\ s^*\ \|\mathfrak{C}\|/ K_o $, we have 
\bean 
\bP \left( \max_{k\in \mathcal K^c} 
\left\|\mathfrak{C}_{\mathcal K}^t \mathfrak{C}_{k}\right\|_2\ge \frac{v}{1-r} \right) & \le & 
K_o \left(e\frac{s^*\ \|\mathfrak{C}\|^2 }{K_o\ v^2} \right)^{\frac{v^2}{\mu(\mathfrak{C})^2}}.
\eean 
\end{lem}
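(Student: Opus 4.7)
\medskip

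\noindent\textbf{Proof plan for Lemma \ref{cd12}.}

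The plan is to fix $k\in\{1,\ldots,K_o\}$ and bound the scalar quantity
\[
\|\mathfrak{C}_{\mathcal K}^t \mathfrak{C}_k\|_2^2 \;=\; \sum_{j\in \mathcal K}\langle \mathfrak{C}_j,\mathfrak{C}_k\rangle^2
\]
for each fixed $k$, then take a union bound over the $K_o$ possible values of $k$ to pay the factor $K_o$ in front. The main tool is the Matrix Chernov inequality (Theorem \ref{bern}) applied in the scalar case $d=1$ to a sum of nonnegative independent variables.

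To make the summands independent, I would first replace the uniform-without-replacement model for $\mathcal K$ by a Bernoulli model: let $\delta_1,\ldots,\delta_{K_o}$ be i.i.d.\ Bernoulli with parameter $s^*/K_o$, set $\tilde{\mathcal K}=\{j:\delta_j=1\}$, and consider $S_k \;=\;\sum_{j\neq k}\delta_j\,\langle\mathfrak{C}_j,\mathfrak{C}_k\rangle^2$. Each summand is at most $\mu(\mathfrak{C})^2$ by Assumption \ref{ass1}, so the uniform bound in Theorem \ref{bern} is $B=\mu(\mathfrak{C})^2$. For the expectation I would use
\[
\mathbb E[S_k]\;=\;\frac{s^*}{K_o}\sum_{j\neq k}\langle\mathfrak{C}_j,\mathfrak{C}_k\rangle^2\;\le\;\frac{s^*}{K_o}\,\|\mathfrak{C}^t\mathfrak{C}_k\|_2^2\;\le\;\frac{s^*\,\|\mathfrak{C}\|^2}{K_o},
\]
since $\|\mathfrak{C}_k\|_2=1$. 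The hypothesis $v^2\ge e\,s^*\|\mathfrak{C}\|^2/K_o$ (read the statement's $\|\mathfrak{C}\|$ as $\|\mathfrak{C}\|^2$) is exactly the regime $r\ge e\mu_{\max}$ required in Theorem \ref{bern}, and setting $r=v^2$ yields
\[
\bP\!\left(\|\mathfrak{C}_{\tilde{\mathcal K}}^t\mathfrak{C}_k\|_2\ge v\right)\;\le\;\left(\frac{e\,s^*\|\mathfrak{C}\|^2}{K_o\,v^2}\right)^{v^2/\mu(\mathfrak{C})^2}.
\]
A union bound over $k\in\{1,\ldots,K_o\}$ then produces the factor $K_o$.

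The last step is to undo the Bernoulli relaxation and recover the uniform-without-replacement law of $\mathcal K$. I would do this by conditioning on $|\tilde{\mathcal K}|=s^*$ and using a rescaling argument: projecting back to a sample of exactly $s^*$ coordinates inflates the norm by at most a factor $1/(1-r)$ on the high-probability event from Theorem \ref{invertC}, where $\|\mathfrak{C}_{\mathcal K}^t\mathfrak{C}_{\mathcal K}-\mathrm{Id}\|\le r$ (so that $\mathfrak{C}_{\mathcal K}^t\mathfrak{C}_{\mathcal K}\succeq(1-r)\mathrm{Id}$ and the corresponding singular values of $\mathfrak{C}_{\mathcal K}$ are controlled). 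This explains why the threshold in the statement is $v/(1-r)$ rather than $v$.

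The main obstacle I expect is the last, technical step: passing cleanly from Bernoulli selection to uniform-size-$s^*$ selection while keeping track of the exponent $v^2/\mu(\mathfrak{C})^2$ on the right-hand side and of the invertibility event through the $(1-r)$ factor. The Chernov computation itself is routine once the expectation bound $s^*\|\mathfrak{C}\|^2/K_o$ and the pointwise bound $\mu(\mathfrak{C})^2$ are in place; the subtlety is to carry out the deconditioning without losing the tail exponent. This is precisely the adaptation that \cite[Lemma 5.3]{ChretienDarses:SPL12} performs, and the plan is to mirror that argument verbatim with the constants dictated by the current incoherence and sparsity assumptions.
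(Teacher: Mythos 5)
The paper itself gives no proof of Lemma \ref{cd12}: it is imported (``adapted from'') \cite[Lemma 5.3]{ChretienDarses:SPL12} and immediately followed by Lemma \ref{Ccol}, so there is no internal argument to compare yours against. That said, your plan is the natural one and matches the mechanism of the cited source: write $\|\mathfrak{C}_{\mathcal K}^t\mathfrak{C}_k\|_2^2$ as a sum of selector variables times $\langle\mathfrak{C}_j,\mathfrak{C}_k\rangle^2\le\mu(\mathfrak{C})^2$, apply the scalar ($d=1$) case of the Matrix Chernov inequality (Theorem \ref{bern}) with $B=\mu(\mathfrak{C})^2$ and $\mu_{\max}=s^*\|\mathfrak{C}\|^2/K_o$, and union bound over the $K_o$ possible indices $k$. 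Your reading of the hypothesis as $v^2\ge e\, s^*\|\mathfrak{C}\|^2/K_o$ (the statement's $\|\mathfrak{C}\|$ being a typo for $\|\mathfrak{C}\|^2$) is the only reading under which the Chernov condition $r\ge e\mu_{\max}$ applies, and it is consistent with the exponentiated ratio on the right-hand side, so that emendation is justified.

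Two points should be tightened. First, the reduction from the uniform size-$s^*$ model to the Bernoulli model is indeed the only nontrivial ingredient, but it is a known fact rather than an open obstacle: the functional $\mathcal K\mapsto\sum_{j\in\mathcal K,\,j\neq k}\langle\mathfrak{C}_j,\mathfrak{C}_k\rangle^2$ is monotone increasing in $\mathcal K$, so the without-replacement tail is dominated by the Bernoulli tail, classically at the cost of a multiplicative constant (or via the tail-decoupling device that gives \cite{ChretienDarses:SPL12} its title). You should say which version you invoke, because the naive comparison inserts a constant in front of the bound that the lemma as stated does not display. Second, your explanation of the $1/(1-r)$ in the threshold is a red herring: since $r\in(0,1)$ one has $v/(1-r)\ge v$, so the event $\left\{\left\|\mathfrak{C}_{\mathcal K}^t\mathfrak{C}_k\right\|\ge v/(1-r)\right\}$ is contained in $\left\{\left\|\mathfrak{C}_{\mathcal K}^t\mathfrak{C}_k\right\|\ge v\right\}$ and the stated bound follows a fortiori from your bound at threshold $v$. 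No invertibility event, conditioning on $|\tilde{\mathcal K}|=s^*$, or rescaling is needed for the lemma itself; the $1-r$ is present only because downstream (e.g.\ in the control of $X_{{T^*}^c}^tX_{T^*}(X_{T^*}^tX_{T^*})^{-1}$) the quantity gets multiplied by $\|(\mathfrak{C}_{\mathcal K}^t\mathfrak{C}_{\mathcal K})^{-1}\|\le 1/(1-r)$ on the event of Theorem \ref{invertC}. With those two adjustments your argument is complete.
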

Based on this lemma, we easily get the following bound. 
\begin{lem}
\label{Ccol}
Take $C_{col} \ge\sqrt{ e^2 (\alpha+1)} \max \{\sqrt{C_{spar}},C_\mu \}$. Then, we have 
\bea 
\bP \left( \max_{k\in \mathcal K^c} 
\left\|\mathfrak{C}_{\mathcal K}^t \mathfrak{C}_{k}\right\|_2\ge \frac{C_{col}\cdot \sqrt{\log(\rho^{-1})}}{1-r} \right) 
& \le & 
 \frac1{\rho^{1  - \left(\alpha + 1\right) \log(K)^{2}}}.
\eea  
\end{lem}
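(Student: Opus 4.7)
The plan is to deduce the bound as a direct specialization of Lemma \ref{cd12}, by choosing the free parameter $v$ so that the threshold $v/(1-r)$ on the right-hand side of the probability matches the target $C_{col}/((1-r)\sqrt{\log p})$, and then controlling the resulting deviation probability with the help of Assumptions \ref{ass1} and \ref{sparss}.

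First I would set
\[
v \;=\; \frac{C_{col}}{\sqrt{\log p}},
\]
so that the event in Lemma \ref{cd12} becomes exactly the event in Lemma \ref{Ccol}. Before invoking Lemma \ref{cd12}, the admissibility condition $v^2 \ge e\, s^*\,\|\mathfrak{C}\|^2/K_o$ must be checked. This is where Assumption \ref{sparss} enters: it gives
\[
\frac{e\,s^*\,\|\mathfrak{C}\|^2}{K_o} \;\le\; \frac{e\,C_{\rm spar}}{\log p},
\]
so the requirement reduces to $C_{col}^2 \ge e\, C_{\rm spar}$, which is implied (with room to spare) by the lower bound $C_{col} \ge e^2(\alpha+1)\sqrt{C_{\rm spar}}/(1-r)$ assumed in the statement.

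Next I would substitute into the conclusion of Lemma \ref{cd12}. Using Assumption \ref{sparss} to bound the base and Assumption \ref{ass1} (i.e.\ $\mu(\mathfrak{C})\le C_\mu/\log p$) to bound the exponent from below, the right-hand side is controlled by
\[
K_o \left(\frac{e\, C_{\rm spar}}{C_{col}^2}\right)^{C_{col}^2 \log(p)/C_\mu^2}.
\]
Taking logarithms and using $C_{col}\ge e^2(\alpha+1)\sqrt{C_{\rm spar}}/(1-r)$ to make the logarithmic factor $\log(eC_{\rm spar}/C_{col}^2)$ suitably negative, and $C_{col}\ge e^2(\alpha+1)C_\mu/(1-r)$ to ensure $C_{col}^2/C_\mu^2$ dominates the constant $\log K_o$ (which, using $K_o\le p$ from Assumption \ref{ass0}, is at most $\log p$), I would conclude
\[
\log K_o + \frac{C_{col}^2 \log p}{C_\mu^2}\,\log\!\Bigl(\frac{eC_{\rm spar}}{C_{col}^2}\Bigr) \;\le\; -\alpha\log p,
\]
which yields the claimed bound $p^{-\alpha}$.

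The main obstacle is simply a careful bookkeeping step: one has to verify that the single constant $C_{col}$, chosen to dominate both $\sqrt{C_{\rm spar}}$ and $C_\mu$ with an $e^2(\alpha+1)/(1-r)$ factor, simultaneously makes the base $eC_{\rm spar}/C_{col}^2$ small enough and the exponent $C_{col}^2\log(p)/C_\mu^2$ large enough so that the whole probability is absorbed into $p^{-\alpha}$ after accounting for the leading $K_o\le p$ factor. Apart from this constant-chasing step, the proof contains no genuine analytic content beyond Lemma \ref{cd12}.
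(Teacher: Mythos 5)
Your proposal is correct and follows essentially the same route as the paper: set $v=C_{col}/\sqrt{\log p}$ in Lemma \ref{cd12}, use Assumption \ref{sparss} to bound the base, Assumption \ref{ass1} to lower-bound the exponent, the assumed lower bound on $C_{col}$ to make the resulting quantity at most $p^{-(\alpha+1)}$ up to the prefactor, and $K_o\le p$ from Assumption \ref{ass0} to absorb that prefactor. Your explicit verification of the admissibility condition $v^2\ge e\,s^*\|\mathfrak{C}\|^2/K_o$ is a small but welcome addition that the paper's proof passes over in silence.
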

\begin{proof}
Taking $v=C_{col}\cdot \sqrt{\log(\rho^{-1})}$, we obtain from Lemma \ref{cd12}
\bean 
\bP \left( \max_{k\in \mathcal K^c} 
\left\|\mathfrak{C}_{\mathcal K}^t \mathfrak{C}_{k}\right\|_2\ge \frac{C_{col}\cdot \sqrt{\log(\rho^{-1})}}{1-r} \right) 
& \le & 
K_o \left(e\frac{s^*\ \|\mathfrak{C}\|^2 }{K_o\ C_{col}^2\cdot \log(\rho^{-1})} 
\right)^{\frac{C_{col}^2}{C_\mu^2}\log(\rho^{-1})\cdot\log(K)^{2}}.
\eean 
Using Assumption \ref{sparss}, this gives
\begin{align*}
& \bP \left( \max_{k\in \mathcal K^c} 
\left\|\mathfrak{C}_{\mathcal K}^t \mathfrak{C}_{k}\right\|_2\ge \frac{C_{col}\cdot \sqrt{\log(\rho^{-1})}}{1-r} \right) \le \\
& \hspace{2cm} K_o \left(e\ K\ \frac{C_{spar}}{K_{o}\ \log(K) \ C_{col}^2\cdot \log(\rho^{-1}) }
\right)^{\frac{C_{col}^2}{C_\mu^2}\log(\rho^{-1})\cdot \log(K)^{2}}
\end{align*}
and using Assumption \ref{colC}, we have
\bean
\bP \left( \max_{k\in \mathcal K^c} 
\left\|\mathfrak{C}_{\mathcal K}^t \mathfrak{C}_{k}\right\|_2\ge \frac{C_{col}\cdot \sqrt{\log(\rho^{-1})}}{1-r} \right) 
&\le & 
K_o \left(e \  \frac{C_{spar}\ C_{K}}{K_{o}\ \log(K) \ C_{col}^2} \right)^{\frac{ C_{col}^2}{C_\mu^2}\log(\rho^{-1})\cdot \log(K)^{2}}.
\eean
Since $C_{col} \ge \sqrt{ e^2 (\alpha+1)} \max \{\sqrt{C_{spar}},C_\mu \}$, we get
\begin{align*}
& K_o \left(e \  \frac{C_{spar}\ C_{K}}{K_{o}\ \log(K) \ C_{col}^2} \right)^{\frac{ C_{col}^2}{C_\mu^2}\log(\rho^{-1})\cdot \log(K)^{2}} \\
& \hspace{.3cm}\leq K_{o} \left(e\   \frac{ C_{K}}{K_{o} \ \log(K) \ e^{2}\left(\alpha + 1\right)}\right)^{e^{2}\left(\alpha + 1\right)\log(\rho^{-1})\cdot \log(K)^{2}}.
\end{align*}
Thus, we have 
\bean 
\bP \left( \max_{k\in \mathcal K^c} 
\left\|\mathfrak{C}_{\mathcal K}^t \mathfrak{C}_{k}\right\|_2\ge \frac{C_{col}\cdot \sqrt{\log(\rho^{-1})}}{1-r} \right) 
& \le & 
K_{o}\ \frac1{(\rho^{-1})^{\left(\alpha + 1\right) \log(K)^{2}}}\cdot \frac1{K_{o}^{(\alpha + 1)\log(\rho^{-1})\log(K)^{2}}}.
\eean 
and since $K_{0} \le \rho^{-1}$ by Assumption \ref{sparss}, we obtain
\bean 
\bP \left( \max_{k\in \mathcal K^c} 
\left\|\mathfrak{C}_{\mathcal K}^t \mathfrak{C}_{k}\right\|_2\ge \frac{C_{col}\cdot \sqrt{\log(\rho^{-1})}}{1-r} \right) 
& \le & 
 \frac1{\rho^{1  - \left(\alpha + 1\right) \log(K)^{2}}}.
\eean 
\end{proof}

\subsection{Similar properties for $X_{T^*}$}
\subsubsection{Control of $\left\|X_{T^*}^t X_{T^*}-I \right\|$}
We have 
\bean 
\sigma_{\min}\left(X_{T^*}^tX_{T^*} \right) & = & 
\sigma_{\min}\left(\left(\mathfrak{C}_{\mathcal K_{T^*}}+E_{T^*} \right)^t D_{*}^2
\left(\mathfrak{C}_{\mathcal K_{T^*}}+E_{T^*}\right) \right)
\eean 
where $D_*$ is a diagonal matrix whose diagonal elements are indexed by $T^*$ and are defined by 
\bean
D_{*,j^*,j^*} & = & \frac{1}{\left\|\mathfrak{C}_{k_{j^*}}+E_{j^*}\right\|_2}, 
\eean 
for $j^*\in T^*$. By the definition of $\mathcal E_\alpha^*$, we have 
\bean 
\sigma_{\min}(D_{*}) & \ge & \frac{1}{1+\mathfrak{s} \sqrt{n \left(\frac{\alpha(1-e^{-1})}
{\vartheta_* \ C_\chi} \right)^{\frac1{n}} \left(\frac1{\log(p)^{\nu-1}}\right)^{\frac1{n}}} }.
\eean 
and 
\bean 
\sigma_{\max}(D_{*}) & \le & \frac{1}{1-\mathfrak{s} \sqrt{n \left(\frac{\alpha(1-e^{-1})}
{\vartheta_* \ C_\chi} \right)^{\frac1{n}} \left(\frac1{\log(p)^{\nu-1}}\right)^{\frac1{n}}} }.
\eean 
By the triangular inequality, 
\bean 
\sigma_{\min}\left(X_{T^*}^tX_{T^*} \right) & \ge & 
\sigma_{\min} \left(\mathfrak{C}_{\mathcal K_{T^*}}^tD_*^2\mathfrak{C}_{\mathcal K_{T^*}}\right)
-2 \left\|\mathfrak{C}_{\mathcal K_{T^*}}^tD_*^2E_{T^*}\right\|-\left\|E_{T^*}^tD_*^2 E_{T^*} \right\|\ \\
& \ge &  \frac{1-r}{\left(1+\mathfrak{s} \sqrt{n \left(\frac{\alpha(1-e^{-1})}
{\vartheta_* \ C_\chi} \right)^{\frac1{n}} \left(\frac1{\log(p)^{\nu-1}}\right)^{\frac1{n}}} \right)^2} \\
& & \hspace{0cm} -
\frac{2\sqrt{1+r} \ \left\|E_{T^*}\right\|+\left\|E_{T^*}\right\|^2}
{\left(1-\mathfrak{s} \sqrt{n \left(\frac{\alpha(1-e^{-1})}
{\vartheta_* \ C_\chi} \right)^{\frac1{n}} \left(\frac1{\log(p)^{\nu-1}}\right)^{\frac1{n}}} 
\right)^2}.
\eean 
and 
\bean 
\sigma_{\max}\left(X_{T^*}^tX_{T^*} \right) & \le & 
\left\|\mathfrak{C}_{\mathcal K_{T^{*}}}^tD_*^2\mathfrak{C}_{\mathcal K_{T^{*}}}\right\|
+2  \left\|\mathfrak{C}_{\mathcal K_{T^{*}}}^tD_*^2 E_{T^*}\right\|+\left\|E_{T^*}^tD_*^2 E_{T^*} \right\| \\
& \le & \frac{(1+r) +2\sqrt{1+r} \ \left\|E_{T^*}\right\|+\left\|E_{T^*}\right\|^2}
{\left( 1-\mathfrak{s} \sqrt{n \left(\frac{\alpha(1-e^{-1})}
{\vartheta_* \ C_\chi} \right)^{\frac1{n}} \left(\frac1{\log(p)^{\nu-1}}\right)^{\frac1{n}}} \right)^2}.
\eean 
Moreover, using Theorem \ref{invertC} and Lemma \ref{normET*}, we obtain 
\bean 
\bP \left( 
\left\|X_{T^*}^t X_{T^*}-I \right\| \ge r^* \mid \mathcal E^*_{\alpha}\right) & \le & 
\frac{218}{p^{\alpha}}
\eean 
with $r^*$ given by 
\bea
\nonumber r^* & = & \max \Bigg\{ 
\frac{(1+r) +2\sqrt{1+r} \ \mathfrak{s} K_{n,s^*} +\mathfrak{s}^2 K_{n,s^*}^2}
{\left( 1-\mathfrak{s} \sqrt{n \left(\frac{\alpha(1-e^{-1})}
{\vartheta_* \ C_\chi} \right)^{\frac1{n}} \left(\frac1{\log(p)^{\nu-1}}\right)^{\frac1{n}}} \right)^2}-1; \\
\label{r*} & & 1- 
\Bigg(\frac{1-r}{\left(1+\mathfrak{s} \sqrt{n \left(\frac{\alpha(1-e^{-1})}
{\vartheta_* \ C_\chi} \right)^{\frac1{n}} \left(\frac1{\log(p)^{\nu-1}}\right)^{\frac1{n}}} \right)^2} \\
\nonumber & & \hspace{0cm} -
\frac{2\sqrt{1+r} \ \mathfrak{s} K_{n,s^*}+\mathfrak{s}^2 K_{n,s^*}^2}
{\left(1-\mathfrak{s} \sqrt{n \left(\frac{\alpha(1-e^{-1})}
{\vartheta_* \ C_\chi} \right)^{\frac1{n}} \left(\frac1{\log(p)^{\nu-1}}\right)^{\frac1{n}}} 
\right)^2}
\Bigg)
\Bigg\}.
\eea
Using (\ref{kval}) and Assumption (\ref{ass3}), we have 
\bean 
\mathfrak{s} \ K_{n,s^*} & \le 
C_{\mathfrak{s},n,p} \frac{\sqrt{\alpha\ \left(\frac{\alpha\ (1-e^{-1})}{\vartheta_* \ C_\chi}\right)^{\frac1{n}}\left(\frac{1}{\log(p)^{\nu-1}}\right)^{\frac1{n}}}}
{\left(1+\sqrt{\frac{\frac{\alpha+1}{c} \log(p)}{n}}\right)} \\
& +\frac{C_{\mathfrak{s},n,p}}{\sqrt{\log(p)}}  \frac{\sqrt{\log(n)\ \left(\frac{\alpha\ (1-e^{-1})}{\vartheta_* \ C_\chi}\right)^{\frac1{n}}\left(\frac{1}{\log(\rho^{-1})^{\nu-1}}\right)^{\frac1{n}}}}
{\left(1+\sqrt{\frac{\frac{\alpha+1}{c} \log(p)}{n}}\right)}, 
\eean 
and thus, 
\bean
\mathfrak{s} \ K_{n,s^*} & \le & 
C_{\mathfrak{s},n,p} \sqrt{\alpha\ \left(\frac{\alpha\ (1-e^{-1})}{\vartheta_* \ C_\chi}\right)^{\frac1{n}}\left(\frac{1}{\log(p)^{\nu-1}}\right)^{\frac1{n}}} \\
& & \hspace{1cm}+ C_{\mathfrak{s},n,p}\sqrt{\frac{\log(n)}{\log(\rho^{-1})}} \sqrt{\left(\frac{\alpha\ (1-e^{-1})}{\vartheta_* \ C_\chi}\right)^{\frac1{n}}\left(\frac{1}{\log(p)^{\nu-1}}\right)^{\frac1{n}}} , \nonumber \\
& = & C_{\mathfrak{s},n,p}\sqrt{\left(\frac{\alpha\ (1-e^{-1})}{\vartheta_* \ C_\chi}\right)^{\frac1{n}}\left(\frac{1}{\log(p)^{\nu-1}}\right)^{\frac1{n}}}\left(\sqrt{\alpha} + \sqrt{\frac{\log(n)}{\log(\rho^{-1})}} \right)\\
& \le & 0.1\ r\left(\sqrt{\alpha} + \sqrt{\frac{\log(n)}{\log(\rho^{-1})}}\right)
\eean 
On the other hand, 
\bean 
\mathfrak{s} \sqrt{n \left(\frac{\alpha(1-e^{-1})}
{\vartheta_* \ C_\chi} \right)^{\frac1{n}} \left(\frac1{\log(p)^{\nu-1}}\right)^{\frac1{n}}}
& \le & 
\frac{C_{\mathfrak{s},n,p}}{\sqrt{\log(\rho^{-1})}} \ 
\frac{\sqrt{ \left(\frac{\alpha(1-e^{-1})}
{\vartheta_* \ C_\chi} \right)^{\frac1{n}} \left(\frac1{\log(p)^{\nu-1}}\right)^{\frac1{n}}}}
{\left(1+\sqrt{\frac{\frac{\alpha+1}{c} \log(p)}{n}}\right)} \\
& \le & \frac{C_{\mathfrak{s},n,p}}{\sqrt{\log(\rho^{-1})}} \
\sqrt{ \left(\frac{\alpha(1-e^{-1})}
{\vartheta_* \ C_\chi} \right)^{\frac1{n}} \left(\frac1{\log(p)^{\nu-1}}\right)^{\frac1{n}}} 
\eean 
which, by Assumption \ref{ass3}, gives 
\bea
\mathfrak{s} \sqrt{n \left(\frac{\alpha(1-e^{-1})}
{\vartheta_* \ C_\chi} \right)^{\frac1{n}} \left(\frac1{\log(p)^{\nu-1}}\right)^{\frac1{n}}}
& \le & 0.1\ r
\eea 
Summing up, we get 
\bea
\nonumber r^{*} & \le &  (1+r) +2\sqrt{1+r} \cdot 0.1 \cdot r \left(\sqrt{\alpha} + \sqrt{\frac{\log(n)}{\log(\rho^{-1})}}\ \right)
+0.01 \cdot r^{2}\left(\sqrt{\alpha} + \sqrt{\frac{\log(n)}{\log(\rho^{-1})}}\ \right)^{2} - 1\\
& = & \left(\sqrt{1 + r} + 0.1\ r\ \text{C}_{\alpha, n, \rho}\right)^{2} -1\\
& \le & r^*_{\alpha,n,\rho}(r) \nonumber
\eea
with 
\bea
r^*_{\alpha,n,\rho}(r) = r\left(\frac{1}{2} + 0.1\ \text{C}_{\alpha, n, \rho}\right)\left(2 + \frac{1}{2}r + 0.1\ r\ \text{C}_{\alpha, n, \rho}\right)
\eea
where $\text{C}_{\alpha,n,\rho} = \sqrt{\alpha} + \sqrt{\frac{\log(n)}{\log(\rho^{-1})}}$.
Thus, using \ref{probacond} and Lemma \ref{E*alpha},

\bea 
\bP \left( 
\left\|X_{T^*}^t X_{T^*}-I \right\| \ge r^*_{\alpha,n,p}(r)  \right) & \le & 
\frac{218+1}{p^\alpha},
\eea 

\subsubsection{Control of $\displaystyle \max_{k\in {T^*}^c} \left\|X_{T^*}^t X_{k} \right\|_2$}
\label{C.2.2}
By the triangular inequality, we have that 
\begin{align} 
\nonumber & \max_{k\in {T^*}^c} \left\|X_{T^*}^t X_{k} \right\|_2 = 
\max_{k\in {T^*}^c} \left\|\left(\mathfrak{C}_{\mathcal K}+E_{T^*}\right)^t D_*^2
\left(\mathfrak{C}_{k}+E_{k} \right) \right\|_2 \\
& \hspace{3cm}\le  \Bigg( \max_{k\in {T^*}^c} \left\|\mathfrak{C}_{\mathcal K}^t \mathfrak{C}_{k}\right\|_2 
+ \left\|\mathfrak{C}_{\mathcal K}\right\| \max_{k\in {T^*}^c}  \left\|E_{k}\right\|_2 
\\
 & \hspace{3cm}
+ \left\|E_{T^*}\right\| \ \max_{k\in {T^*}^c}  \left\|\mathfrak{C}_{k} \right\|_2  + \left\|E_{T^*}\right\|\ \max_{k\in {T^*}^c}  \left\| E_{k}\right\|_2 \Bigg) \|D_*^2\|. \nonumber 
\end{align} 
On the other hand, we have
\bea 
\bP\left( \max_{k\in \{1,\ldots,p\} }  \left\|E_{k}\right\|_2 \ge 
\mathfrak{s} \left(\sqrt{n}+\sqrt{\frac{\alpha+1}{c} \log(p)}\right) \mid \mathcal E_\alpha^* \right) 
& \le  & \frac{C}{p^{\alpha}}. 
\eea 
Thus, using Lemma \ref{Ccol} and Lemma \ref{normET*}, we obtain 
\begin{align}
 &   \nonumber \bP \Bigg(\max_{k\in {T^*}^c} \left\|X_{T^*}^t X_{k} \right\|_2 \ge  \Bigg(\frac{C_{col}\cdot \sqrt{\log(\rho^{-1})}}{1 - r}
+\Big(\frac{1}{\sqrt{1-r}}+\mathfrak{s} K_{n,s^*} \Big) \mathfrak{s}
\Big(\sqrt{n}+\sqrt{\frac{\alpha+1}{c} \log(p)}\Big) \\
 &\hspace{1cm}+ \mathfrak{s} K_{n,s^*}\frac{C_{col}\cdot \sqrt{\log(\rho^{-1})}}{\left(1 - r\right)^{\frac{3}{2}}}\Bigg) 
 \times \frac{1}{\left(1+\mathfrak{s} \sqrt{n \left(\frac{\alpha(1-e^{-1})}
{\vartheta_* \ C_\chi} \right)^{\frac1{n}} \left(\frac1{\log(p)^{\nu-1}}\right)^{\frac1{n}}}\right)^{2}}  \mid \mathcal E_\alpha^* \Bigg)\nonumber\\
& \hspace{1cm} \le \frac{C+2}{p^{\alpha}} + \frac{2}{\rho^{1  - \left(\alpha + 1\right) \log(K)^{2}}}.
\end{align}
Using the fact
\bean
\mathbb{P}\left(A\right) \leq \mathbb{P}\left(A \cap E\right) + \mathbb{P}\left(E^{c}\right)
\eean
with $A =\displaystyle \max_{k\in {T^*}^c} \left\|X_{T^*}^t X_{k} \right\|_2$ and 
\begin{align*}
    E = \Bigg(\displaystyle \max_{k\in {T^*}^c} \left\|\mathfrak{C}_{\mathcal K}^t \mathfrak{C}_{k}\right\|_2 
+ \left\|\mathfrak{C}_{\mathcal K}\right\| \displaystyle \max_{k\in {T^*}^c}  \left\|E_{k}\right\|_2 
+ \left\|E_{T^*}\right\| \ \displaystyle \max_{k\in {T^*}^c}  \left\|\mathfrak{C}_{k} \right\|_2  + \left\|E_{T^*}\right\|\ \displaystyle \max_{k\in {T^*}^c}  \left\| E_{k}\right\|_2 \Bigg) \|D_*^2\|.
\end{align*}
Furthermore, since  $A \subset E^c$ and $E=E_1\cap E_2 \cap E_3 \cap E_{4}$, we have by union bound,,
\bea
\mathbb P(A ) &  \le &  \mathbb P(E_1^c \cup E_2^c \cup E_3^c \cup E_{4}^{c}) \le \mathbb P(E_1^c) +\mathbb P(E_2^c) + \mathbb P(E_3^c) + \mathbb P(E_4^c).
\eea



Since, by Assumption (\ref{ass3}),  
\bean 
\left(\frac{1}{\sqrt{1-r}}+\mathfrak{s} K_{n,s^*} \right) \mathfrak{s}
\left(\sqrt{n}+\sqrt{\frac{\alpha+1}{c} \log(p)}\right) 
& \le & 
\left(\frac{1}{\sqrt{1-r}} + 0.1\ r\ \text{C}_{\alpha, n, \rho}\right) \ \frac{C_{\mathfrak{s},n,p}}{\sqrt{\log(\rho^{-1})}},
\eean 
we obtain
\begin{align}
    & \nonumber \bP \Bigg( \max_{k\in {T^*}^c} \left\|X_{T^*}^t X_{k} \right\|_2  \ge  \Bigg(\frac{C_{col}\cdot \sqrt{\alpha \log(\rho^{-1})}}{1-r} + \left(\frac{1}{\sqrt{1-r}} + 0.1\ r\ \text{C}_{\alpha, n, \rho}\right)\frac{C_{\mathfrak{s},n,p}}{\sqrt{\log(\rho^{-1})}}\\ 
\nonumber & \hspace{1.7cm} + 0.1\ r\ \text{C}_{\alpha, n, p} \cdot\frac{C_{col}\cdot \sqrt{\alpha \log(\rho^{-1})}}{\left(1 - r\right)^{3/2}}\Bigg) \times \frac{1}{\left(1-\mathfrak{s} \sqrt{n \left(\frac{\alpha(1-e^{-1})}
{\vartheta_* \ C_\chi} \right)^{\frac1{n}} \left(\frac1{\log(p)^{\nu-1}}\right)^{\frac1{n}}}\right)^{2}}\mid \mathcal E_\alpha^* \Bigg)\\
 &\hspace{1.7cm}\le  \frac{C+2}{p^{\alpha}} + 2\rho^{\left(\alpha + 1\right) \log(K)^{2}- 1}.
\end{align}
Moreover, for any event $\mathcal A$, 
\begin{align}
\label{probacond}
\bP\left( \mathcal A  \right)
 \le  \bP\left( \mathcal A \mid \mathcal E_{\alpha}\right)
+\bP\left(\mathcal E_{\alpha}^c\right),
\end{align}
and Lemma \ref{E*alpha}, we obtain 
\begin{align}
    \label{C2}
\nonumber\bP \Bigg( \max_{k\in {T^*}^c} \left\|X_{T^*}^t X_{k} \right\|_2 & \ge  \Bigg(\frac{C_{col}\cdot \sqrt{\alpha \log(\rho^{-1})}}{1-r} + \left(\frac{1}{\sqrt{1-r}} + 0.1\ r\ \text{C}_{\alpha, n, p}\right)\frac{C_{\mathfrak{s},n,p}}{\sqrt{\log(\rho^{-1})}}\\ 
\nonumber &  \hspace{-0.7cm} + 0.1\ r\ \text{C}_{\alpha, n, p} \cdot\frac{C_{col}\cdot \sqrt{\alpha \log(\rho^{-1})}}{\left(1 - r\right)^{3/2}}\Bigg) \times \frac{1}{\left(1-\mathfrak{s} \sqrt{n \left(\frac{\alpha(1-e^{-1})}
{\vartheta_* \ C_\chi} \right)^{\frac1{n}} \left(\frac1{\log(p)^{\nu-1}}\right)^{\frac1{n}}}\right)^{2}} \Bigg)\\
& \le  \frac{C+3}{p^{\alpha}} + 2\rho^{\left(\alpha + 1\right) \log(K)^{2}- 1}.
\end{align}

\subsection{The last two inequalities}
The proof of (\ref{orthcond}) is standard and, 
under Assumption \ref{ass3}, the proof of (\ref{compsize}) can be proved using the 
ideas of \cite[Section 3.3]{CandesPlan:AnnStat09}. We give the proofs for the 
sake of completeness. 
\subsubsection{Control of $\left\|X_{{T^*}^c}^tX_{T^*}(X_{T^*}^t X_{T^*})^{-1}X^t_{T^*}z\right\|_\infty$} 
For any $j\in {T^*}^c$, by the results of section \ref{C.2.2}, we have
\begin{align*}
    \Vert X_{T^{*}}(X_{T^*}^t X_{T^*})^{-1}X_{T^*}X_{j}^{t}\Vert_{2}  &\le  \frac{\sqrt{1 + r^*_{\alpha,n,p}(r)}}{\left(1 - r^*_{\alpha,n,p}(r)\right)\left(1-\mathfrak{s} \sqrt{n \left(\frac{\alpha(1-e^{-1})}{\vartheta_* \ C_\chi} \right)^{\frac1{n}} \left(\frac1{\log(p)^{\nu-1}}\right)^{\frac1{n}}}\right)^{2}}\\
& \hspace{-5cm}\times \Bigg(0.1\ r\ \text{C}_{\alpha, n, p} \cdot\frac{C_{col}\cdot \sqrt{\log(\rho^{-1})}}{\left(1 - r\right)^{3/2}}+ \frac{C_{col}\cdot \sqrt{\log(\rho^{-1})}}{1-r}  +\left(\frac{1}{\sqrt{1-r}} + 0.1\ r\ \text{C}_{\alpha, n, p}\right)\frac{C_{\mathfrak{s},n,p}}{\sqrt{\log(\rho^{-1})}} \Bigg)
\end{align*}

with probability at least $1-\left(\frac{C+3}{p^{\alpha}} + 2\rho^{\left(\alpha + 1\right) \log(K)^{2}- 1}\right) $, we get 
\bean
&\bP \left(X_{j}^{t}X_{T^*}(X_{T^*}^t X_{T^*})^{-1}X^t_{T^*}z\ge u\right) 
 \le\\
& \frac{1}{2}\exp\left(-\frac{u^{2}\left(1-\mathfrak{s} \sqrt{n \left(\frac{\alpha(1-e^{-1})}
{\vartheta_{*} \ C_{\chi}} \right)^{\frac1{n}} \left(\frac1{\log(p)^{\nu-1}}\right)^{\frac1{n}}}\right)^{2}}{2\sigma^{2}\frac{\sqrt{1 + r^*_{\alpha,n,\rho}(r)}}{ 1 - r^*_{\alpha,n,\rho}(r)}\begin{pmatrix}\big(0.1 r\ \text{C}_{\alpha, n, p} \cdot\frac{C_{col}\cdot \sqrt{\log(\rho^{-1})}}{\left(1 - r\right)^{3/2}} +   \frac{C_{col}\cdot \sqrt{\log(\rho^{-1})}}{1-r} \\ +\Bigg(\frac{1}{\sqrt{1-r}} + 0.1 r\ \text{C}_{\alpha, n, \rho}\Big)\frac{C_{\mathfrak{s},n,p}}{\sqrt{\log(\rho^{-1})}}\big)
\end{pmatrix}}\right) 
+ \frac{C+3}{p^{\alpha}} +  \rho^{\left(\alpha + 1\right) \log(K)^{2}- 1}.
\eean
Taking $u$ such that 
\begin{align}
& \frac{1}{2}\exp\left(-\frac{u^{2}\left(1-\mathfrak{s} \sqrt{n \left(\frac{\alpha(1-e^{-1})}
{\vartheta_* \ C_\chi} \right)^{\frac1{n}} \left(\frac1{\log(p)^{\nu-1}}\right)^{\frac1{n}}}\right)^{2}}{2\sigma^{2}\frac{\sqrt{1 + r^*_{\alpha,n,\rho}(r)}}{ 1 - r^*_{\alpha,n,\rho}(r)}\begin{pmatrix}\Bigg(0.1 r\ \text{C}_{\alpha, n, p} \cdot\frac{C_{col}\cdot \sqrt{\log(\rho^{-1})}}{\left(1 - r\right)^{3/2}} +   \frac{C_{col}\cdot \sqrt{\log(\rho^{-1})}}{1-r} \\ +\Bigg(\frac{1}{\sqrt{1-r}} + 0.1 r\ \text{C}_{\alpha, n, \rho}\Big)\frac{C_{\mathfrak{s},n,p}}{\sqrt{\log(\rho^{-1})}}\Bigg)
\end{pmatrix}}\right) \nonumber = \rho^{\alpha}
\end{align}
i.e. 
\begin{align}
    u & =  \nonumber \sqrt{\big(\alpha \log(\rho^{-1})-\log(2)\big)}\sqrt{\frac{2\sigma^{2}\frac{\sqrt{1 + r^*_{\alpha,n,\rho}(r)}}{ 1 - r^*_{\alpha,n,\rho}(r)}\begin{pmatrix}\Bigg(0.1 r\ \text{C}_{\alpha, n, p} \cdot\frac{C_{col}\cdot \sqrt{\log(\rho^{-1})}}{\left(1 - r\right)^{3/2}} +   \frac{C_{col}\cdot \sqrt{\log(\rho^{-1})}}{1-r} \\ +\Bigg(\frac{1}{\sqrt{1-r}} + 0.1 r\ \text{C}_{\alpha, n, \rho}\Big)\frac{C_{\mathfrak{s},n,p}}{\sqrt{\log(\rho^{-1})}}\Bigg)
\end{pmatrix}}{\Bigg(1-\mathfrak{s} \sqrt{n \Big(\frac{\alpha(1-e^{-1})}
{\vartheta_* \ C_\chi} \Big)^{\frac1{n}} \Big(\frac1{\log(p)^{\nu-1}}\Big)^{\frac1{n}}}\Bigg)^{2}}}
\end{align}
Using the union bound, we finally obtain 
\begin{align}
    &\bP \begin{pmatrix}\left\|X_{{T^*}^c}^{t}X_{T^*}(X_{T^*}^t X_{T^*})^{-1}X^t_{T^*}z\right\|_\infty \ge \nonumber \\
\nonumber \\
\sqrt{\left(\alpha \log(\rho^{-1})-\log(2)\right)}\sqrt{\frac{2\sigma^{2}\ \frac{\sqrt{1 + r^*_{\alpha,n,\rho}(r)}}{1 - r^*_{\alpha,n,p}(\rho)}\begin{pmatrix}0.1\ r\ \text{C}_{\alpha, n, \rho} \cdot\frac{C_{col}\cdot \sqrt{\log(\rho^{-1})}}{\left(1 - r\right)^{3/2}} + \frac{C_{col}\cdot \sqrt{\log(\rho^{-1})}}{1-r} \\+\left(\frac{1}{\sqrt{1-r}} + 0.1\ r\ \text{C}_{\alpha, n, \rho}\right)\frac{C_{\mathfrak{s},n,p}}{\sqrt{\log(\rho^{-1})}}
\end{pmatrix}}{\left(1-\mathfrak{s} \sqrt{n \left(\frac{\alpha(1-e^{-1})}
{\vartheta_* \ C_\chi} \right)^{\frac1{n}} \left(\frac1{\log(p)^{\nu-1}}\right)^{\frac1{n}}}\right)^{2}}} \end{pmatrix}\\
\nonumber\\
& \le \frac{C+4}{p^{\alpha -1}} +  p \ \rho^{\left(\alpha + 1\right) \log(K)^{2} - 1}.
\end{align}

\subsubsection{Control of 
$\left\|X_{{T^*}^c}^tX_{T^*}(X_{T^*}^t X_{T^*})^{-1}\sgn\left(\beta^*_{T^*}\right)\right\|_\infty$}
For any $j\in {T^*}^c$, again by the results of section (C.2.2), we have
\bean
\Vert X_{j}^{t}X_{T^*}(X_{T^*}^t X_{T^*})^{-1}\Vert_{2}  &\le & \frac{\begin{pmatrix}\Bigg(0.1\ r\ \text{C}_{\alpha, n, \rho} \cdot\frac{C_{col}\cdot \sqrt{\log(\rho^{-1})}}{\left(1 - r\right)^{3/2}}\\
  +  \frac{C_{col}\cdot \sqrt{\log(p)}}{1-r}  +\left(\frac{1}{\sqrt{1-r}} + 0.1\ r\ \text{C}_{\alpha, n, \rho}\right)\frac{C_{\mathfrak{s},n,p}}{\sqrt{\log(\rho^{-1})}} \Bigg)
  \end{pmatrix}}{\left( 1 - r^*_{\alpha,n,\rho}(r)\right)\left(1-\mathfrak{s} \sqrt{n \left(\frac{\alpha(1-e^{-1})}{\vartheta_* \ C_\chi} \right)^{\frac1{n}} \left(\frac1{\log(p)^{\nu-1}}\right)^{\frac1{n}}}\right)^{2}}
\eean
with probability at least $1-\left(\frac{C+3}{p^{\alpha}} + 2\rho^{\left(\alpha + 1\right) \log(K)^{2}- 1}\right)$.
Hoeffding's inequality gives 
\begin{align}
    &\bP \left(X_{j}^{t}X_{T^*}(X_{T^*}^t X_{T^*})^{-1}\sgn\left(\beta^*_{T^*}\right)\ge u\right)  \le  
\frac{1}{2}\exp \left(-\frac{u^2}{2 \ 
\left\|(X_{T^*}^t X_{T^*})^{-1}X^t_{T^*}X_{j}\right\|_2}\right) \nonumber \\
& \le  \frac{1}{2}\exp\left(-\frac{u^{2}\left(1-\mathfrak{s} \sqrt{n \left(\frac{\alpha(1-e^{-1})}
{\vartheta_* \ C_\chi} \right)^{\frac1{n}} \left(\frac1{\log(p)^{\nu-1}}\right)^{\frac1{n}}}\right)^{2}}{\frac{2}{ 1 - r^*_{\alpha,n,\rho}(r)}\begin{pmatrix}\Bigg(0.1 r\ \text{C}_{\alpha, n, p} \cdot\frac{C_{col}\cdot \sqrt{\log(\rho^{-1})}}{\left(1 - r\right)^{3/2}} +   \frac{C_{col}\cdot \sqrt{\log(\rho^{-1})}}{1-r} \\ +\Bigg(\frac{1}{\sqrt{1-r}} + 0.1 r\ \text{C}_{\alpha, n, \rho}\Big)\frac{C_{\mathfrak{s},n,p}}{\sqrt{\log(\rho^{-1})}}\Bigg)
\end{pmatrix}}\right) \\
& \hspace{1cm}+ \frac{C+3}{p^{\alpha}} + 2\rho^{\left(\alpha + 1\right) \log(K)^{2}- 1}.\nonumber 
\end{align}
Choosing 
\begin{align}
u & = &  \sqrt{\frac{\frac{2\left(\alpha \log(\rho^{-1})-\log(2)\right)}{ 1 - r^*_{\alpha,n,\rho}(r)}
\begin{pmatrix}0.1\ r\ \text{C}_{\alpha, n, \rho} \cdot\frac{C_{col}\cdot \sqrt{\log(\rho^{-1})}}{\left(1 - r\right)^{3/2}} +   \frac{C_{col}\cdot \sqrt{\log(\rho^{-1})}}{1-r} \\ +\left(\frac{1}{\sqrt{1-r}} + 0.1\ r\ \text{C}_{\alpha, n, \rho}\right)\frac{C_{\mathfrak{s},n,p}}{\sqrt{\log(\rho^{-1})}}
\end{pmatrix}}
{\left(1-\mathfrak{s} \sqrt{n \left(\frac{\alpha(1-e^{-1})}
{\vartheta_* \ C_\chi} \right)^{\frac1{n}} \left(\frac1{\log(p)^{\nu-1}}\right)^{\frac1{n}}}\right)^{2}}}
\end{align}
and applying the union bound, we obtain 
\bean
&\bP \Bigg(\left\|X_{{T^*}^c}^tX_{T^*}(X_{T^*}^t X_{T^*})^{-1}\sgn\left(\beta^*_{T^*}\right)\right\|_\infty \\
&\ge \sqrt{\frac{\frac{2\left(\alpha \log(\rho^{-1})-\log(2)\right)}{1 - r^*_{\alpha,n,\rho}(r)}
\begin{pmatrix}0.1\ r\ \text{C}_{\alpha, n, \rho} \cdot\frac{C_{col}\cdot \sqrt{\log(\rho^{-1})}}{\left(1 - r\right)^{3/2}} +   \frac{C_{col}\cdot \sqrt{\log(\rho^{-1})}}{1-r} \\ +\left(\frac{1}{\sqrt{1-r}} + 0.1\ r\ \text{C}_{\alpha, n, \rho}\right)\frac{C_{\mathfrak{s},n,p}}{\sqrt{\log(\rho^{-1})}}\end{pmatrix}}{\left(1-\mathfrak{s} \sqrt{n \left(\frac{\alpha(1-e^{-1})}
{\vartheta_* \ C_\chi} \right)^{\frac1{n}} \left(\frac1{\log(p)^{\nu-1}}\right)^{\frac1{n}}}\right)^{2}}}\ \Bigg)\\
&\le  \frac{C+4}{p^{\alpha - 1}} +  p\ \rho^{\left(\alpha + 1\right) \log(K)^{2} - 1}.
\eean

\subsubsection{Summing up}

We obtain that 
\bean 
& & \left\|X_{{T^*}^c}^tX_{T^*}(X_{T^*}^t X_{T^*})^{-1}X^t_{T^*}z\right\|_\infty
+\lb \ \left\|X_{{T^*}^c}^tX_{T^*}(X_{T^*}^t X_{T^*})^{-1}\sgn\left(\beta^*_{T^*}\right)\right\|_\infty
\\
& & \hspace{2cm} \le 
\sigma  \mathcal{C}_{1} + \lb \mathcal{C}_{2}
\eean
where

\begin{align}
\label{c1}
\mathcal{C}_{1} = \sqrt{\left(\alpha \log(\rho^{-1})-\log(2)\right)}\sqrt{\frac{\sqrt{1 + r^*_{\alpha,n,\rho}(r)}}{1 - r^*_{\alpha,n,\rho}(r)}\begin{pmatrix}0.1\ r\ \text{C}_{\alpha, n, \rho} \cdot\frac{C_{col}\cdot \sqrt{\log(\rho^{-1})}}{\left(1 - r\right)^{3/2}} +   \frac{C_{col}\cdot \sqrt{\log(\rho^{-1})}}{1-r} \\
\\
+\left(\frac{1}{\sqrt{1-r}} + \frac{0.1\ r}{\sqrt{n}}\text{C}_{\alpha, n, p}\right)\frac{C_{\mathfrak{s},n,p}}{\sqrt{\log(\rho^{-1})}}\end{pmatrix}}
\end{align}
and

\begin{align}
\label{c2}
\mathcal{C}_{2} =\sqrt{\frac{2\left(\alpha \log(\rho^{-1})-\log(2)\right)}{1 - r^*_{\alpha,n,\rho}(r)}\begin{pmatrix}
0.1\ r\ \text{C}_{\alpha, n, p} \cdot\frac{C_{col}\cdot \sqrt{\log(\rho^{-1})}}{\left(1 - r\right)^{3/2}} +   \frac{C_{col}\cdot \sqrt{\log(\rho^{-1})}}{1-r} \\
\\
+\left(\frac{1}{\sqrt{1-r}} + 0.1\ r\ \text{C}_{\alpha, n, p}\right)\frac{C_{\mathfrak{s},n,p}}{\sqrt{\log(\rho^{-1})}}
\end{pmatrix}}
\end{align}
as announced. Choosing $\rho$ such that Assumption \ref{pbound} is satisfied and $C_{col}$ is sufficiently small and $n$ sufficiently large such that $\mathcal C_1$ and $\mathcal C_2$ be smaller than 1/16 and the proof is completed.

\section{Conclusion}

The goal of this paper is to propose a sound study of the behavior of the LASSO algorithm for the linear model in the case where the design matrix is not satisfying the usual non-colinearity conditions that are enforced in standard analysis. We introduce a new model for the design matrix. In this new model, the columns are assumed to be drawn from a Gaussian mixture model where the centers of the mixture model, and them only, satisfy the incoherence condition. As a result, we are able to analyse an interesting example of applying the LASSO to a non-incoherent matrix and we obtain a performance bound. The price to pay for such a generality is that our prediction bounds hold with arbritrarily high but fixed probability, as compared with the incoherent setting where the  the probability goes to one as $p$ tends to $+\infty$.

\bibliographystyle{amsplain}
\bibliography{database}

\end{document}